\newcommand{\intav}[1]{\mathchoice {\mathop{\vrule width 6pt height 3 pt depth -2.5pt\kern -8pt \intop}\nolimits_{\kern -6pt#1}} {\mathop{\vrule width5pt height 3 pt depth -2.6pt \kern -6pt \intop}\nolimits_{#1}}{\mathop{\vrule width 5pt height 3 pt depth -2.6pt \kern -6pt\intop}\nolimits_{#1}} {\mathop{\vrule width 5pt height 3 pt depth-2.6pt \kern -6pt \intop}\nolimits_{#1}}}
\newtheorem{thm}{Theorem}%
\newtheorem{Definition}{Definition}
\newtheorem{Lemma}{Lemma}
\newtheorem{Corollary}{Corollary}
\newtheorem{Proposition}{Proposition}
\newtheorem{Assumption}{A}
\begin{document}

\title{Potential estimates for fully nonlinear elliptic equations with bounded ingredients}

\author{Edgard A. Pimentel and Miguel Walker}
	\date{\today} 
		
\maketitle

		\begin{center}
			\emph{\small{To Giuseppe Mingione, on the occasion of his 50th birthday,\\with admiration, gratitude and friendship}}
		\end{center}
\vspace{.1in}

\begin{abstract}
\noindent We examine $L^p$-viscosity solutions to fully nonlinear elliptic equations with bounded-measurable ingredients. By considering $p_0<p<d$, we focus on gradient-regularity estimates stemming from nonlinear potentials. We find conditions for local Lipschitz-continuity of the solutions and continuity of the gradient. We briefly survey recent breakthroughs in regularity theory arising from (nonlinear) potential estimates. Our findings follow from -- and are inspired by -- fundamental facts in the theory of $L^p$-viscosity solutions, and results in the work of Panagiota Daskalopoulos, Tuomo Kuusi and Giuseppe Mingione \cite{DKM_2014}. 

\medskip

\noindent \textbf{Keywords}: Fully nonlinear equations; $L^p$-viscosity solutions; potential estimates; gradient-regularity estimates.
\medskip 

\noindent \textbf{MSC (2020)}: 35B65; 35J47; 35J60; 31B10.

\end{abstract}

\vspace{.1in}

\section{Introduction}\label{sec_introduction}

We study the regularity of $L^p$-viscosity solutions to
\begin{equation}\label{eq_main}
	F\big(D^2u,Du,u,x\big)=f\hspace{.2in}\mbox{in}\hspace{.2in}\Omega,
\end{equation}
where $F:\mathcal S(d)\times\mathbb{R}^d\times\mathbb{R}\times \Omega\setminus\mathcal{N}\to\mathbb{R}$, is a uniformly elliptic operator with bounded-measurable ingredients, and  $f\in L^p(\Omega)$ for $p>p_0$. Here, $\Omega\subset\mathbb{R}^d$ is an open and bounded domain, $\mathcal{N}$ is a null set, $S(d)\sim\mathbb{R}^\frac{d(d+1)}{2}$ is the space of symmetric matrices, and $d/2<p_0<d$ is the exponent such that the Aleksandrov-Bakelman-Pucci (ABP) estimate is available for elliptic equations with right-hand side in $L^p$, for $p>p_0$.

Our contribution is two-fold. From a mathematical viewpoint, we extend the gradient potential estimates reported in \cite{DKM_2014} to operators with bounded-measurable coefficients depending explicitly on lower-order terms.

We argue by combining well-known facts in the theory of $L^p$-viscosity solutions, obtaining at once the corpus of results in \cite{DKM_2014}. That reasoning leads to the second layer of our contribution: the findings in the paper attest to the broad scope, and consequential character, of the developments reported in \cite{DKM_2014}.

The regularity theory for viscosity solutions to \eqref{eq_main} is a delicate matter. Indeed, the first result in this realm is the so-called Krylov-Sofonov theory. It states that, if $u\in C(B_1)$ is a viscosity solution to
\begin{equation}\label{eq_simple0}
	F(D^2u)\leq0\leq G(D^2u)\hspace{.2in}\mbox{in}\hspace{.2in}B_1
\end{equation}
and $F$ and $G$ are $(\lambda,\Lambda)$-elliptic operators, then $u\in C^{\alpha}_{\rm loc}(B_1)$, for some $\alpha\in(0,1)$ depending only on $d$, $\lambda$ and $\Lambda$. In addition, one derives an estimate of the form
\[
	\left\|u\right\|_{C^{\alpha}(B_{1/2})}\leq C\left\|u\right\|_{L^\infty(B_1)},
\]
where $C=C(d,\lambda,\Lambda)$ \cite{KrSa1}. Indeed, the regularity result in the Krylov-Safonov theory concerns inequalities of the form 
\begin{equation}\label{eq_ksorig}
	a_{ij}(x)\partial^2_{ij}u\leq0\leq b_{ij}(x)\partial^2_{ij}u
\end{equation}
where the matrices $A:=(a_{ij})_{i,j=1}^d$ and $B:=(b_{ij})_{i,j=1}^d$ are uniformly elliptic, \emph{with the same ellipticity constants}. The transition of those inequalities to \eqref{eq_simple0} comes from the fundamental theorem of calculus. Indeed, notice that if $F(0)=G(0)=0$, we get
\[
	\int_0^1\frac{d}{dt}F(tD^2u){\rm d}t=F(D^2u)\leq0\leq G(D^2u)=\int_0^1\frac{d}{dt}G(tD^2u){\rm d}t.
\]
By computing the derivatives above with respect to the variable $t$ and setting
\[
	a_{ij}(x):=\int_0^1D_MF(tD^2u){\rm d}t\hspace{.15in}\mbox{and}\hspace{.15in}b_{ij}(x):=\int_0^1D_MG(tD^2u){\rm d}t,
\]
one notices that a solution to \eqref{eq_simple0} also satisfies \eqref{eq_ksorig}. 

If we replace the inequality in \eqref{eq_simple0} with the equation
\begin{equation}\label{eq_more}
	F(D^2u)=0\hspace{.2in}\mbox{in}\hspace{.2in}B_1
\end{equation}
and require $F$ to be a $(\lambda,\Lambda)$-elliptic operator, solutions become of class $C^{1,\alpha}$ with estimates. Once again, $\alpha\in(0,1)$ depends only on the dimension and the ellipticity \cite{Trudinger,CC}. Finally, if we require $F$ to be uniformly elliptic \emph{and convex} (or concave) viscosity solutions to \eqref{eq_more} are of class $C^{2,\alpha},$ with estimates. This is known as the Evans-Krylov theory, developed independently in the works of Lawrence C. Evans \cite{Evans1} and Nikolai Krylov \cite{Krylov1}.

The analysis of operators with variable coefficients, in the context of non-homogeneous problems first appeared in the work of Luis Caffarelli \cite{Caffarelli_1989}. In that paper, the author considers the equation
\begin{equation}\label{eq_caff}
	F(D^2u,x)=f\hspace{.2in}\mbox{in}\hspace{.2in}B_1
\end{equation}
and requires $F(M,x)$ to be uniformly elliptic. The fundamental breakthrough launched in \cite{Caffarelli_1989} concerns the connection of the variable coefficients operator with its fixed-coefficients counterpart. To be more precise, the author introduces an \emph{oscillation measure} $\beta(x,x_0)$ defined as
\[
	\beta(x,x_0):=\sup_{M\in S(d)}\frac{|F(M,x)-F(M,x_0)|}{1+\|M\|}.
\]
Different smallness conditions on this quantity yield estimates in distinct spaces. It includes estimates in $C^{1,\alpha}$, $W^{2,p}$ and $C^{2,\alpha}$-spaces. Of course, further conditions on the source term $f$ must hold. In particular, it is critical that $f\in L^p(B_1)$, for $p>d$.

An interesting aspect of this theory concerns the continuity hypotheses on the data of the problem. For instance, the regularity estimates do not depend on the continuity of $f$. Meanwhile, the notion of $C$-viscosity solution requires $f$ to be defined everywhere in the domain, as it depends on pointwise inequalities \cite{CL,CEL,CIL}. Hence, asking $f$ to be merely a measurable function in some Lebesgue space is not compatible with the theory. See the last paragraph before Theorem 1 in \cite{Caffarelli_1989}.

In \cite{CCKS_1996}, the authors propose an $L^p$-viscosity theory, recasting the notion of viscosity solutions in an \emph{almost-everywhere} sense. In that paper, the authors examine \eqref{eq_main} and suppose the ingredients of the problem are in $L^p$, for $p>p_0$. The quantity $d/2<p_0<d$ appeared in the work of Eugene Fabes and Daniel Stroock \cite{FS1984}. It stems from the improved integrability of the Green function for $(\lambda,\Lambda)$-linear operators.

In \cite{Escauriaza}, and before the formalization of $L^p$-viscosity solutions, the quantity $p_0$ appeared in the context of Sobolev regularity. In that paper, Luis Escauriaza resorted to the improved integrability of the Green function from \cite{FS1984} to extend Caffarelli's $W^{2,p}$-regularity theory to the range $p_0<p<d$. For that reason, $p_0$ is referred to in the literature as Escauriaza's exponent. 

A fundamental study of the regularity theory for $L^p$-viscosity solutions to \eqref{eq_main} appeared in \cite{Swiech_1997}. Working merely under uniform ellipticity, the author proves regularity results for the gradient of the solutions. In case $p>d$, solutions are of class $C^{1,\alpha}$. Here, the smoothness degree depends on the Krylov-Safonov exponent, and on the ratio $d/p$. However, in case $p_0<p\leq d$, solutions are only in $W^{1,q}$, where $q\to\infty$ as $p\to d$.

The findings in \cite{Swiech_1997} highlight an important aspect of the theory, namely: the smoothness of $Du$, in the range $p_0<p<d$, is a very delicate matter. It is known that $C^{1,\alpha}$-regularity is not available in this context. 

A program that successfully accessed this class of information is the one in \cite{DKM_2014}. Through a modification in the linear Riesz potential, tailored to accommodate the $p$-integrability of the data, the authors produce potential estimates for the $L^p$-viscosity solutions to \eqref{eq_caff}. Ultimately, those estimates yield a modulus of continuity for the gradient of the solutions.

In addition to uniform ellipticity, the results in \cite{DKM_2014} require an average control on the oscillation of $F(M,x)$. It also assumes $f\in L^p(\Omega)$ for $p_0<p<d$. Under these conditions, the authors prove a series of potential estimates. Those lead to local boundedness and (an explicit modulus of) continuity for $Du$. Also, a borderline condition in Lorentz spaces follows: if $f\in L^{d,1}(\Omega)$, then $Du$ is continuous. Besides providing new, fundamental developments to the regularity theory of fully nonlinear elliptic equations, the arguments in \cite{DKM_2014} are pioneering in taking to the non-variational setting a class of methods available before only for problems in the divergence form.

We extend the findings in \cite{DKM_2014} to the case of \eqref{eq_main} in the presence of bounded-measurable ingredients. Our analysis heavily relies on properties of $L^p$-viscosity solutions \cite{CCKS_1996,Swiech_1997}; see also \cite{Winter_2009}.

Our first main result concerns the Lipschitz-continuity of $L^p$-viscosity solutions to \eqref{eq_main} and reads as follows.

\begin{thm}[Lipschitz continuity]\label{thm_main1}
Let $u\in C(\Omega)$ be an $L^p$-viscosity solution to \eqref{eq_main}. Suppose Assumptions A\ref{assump_F} and A\ref{assump_f} are in force. Then, for every $q>d$, there exists a constant $\theta^*=\theta^*(d,\lambda,\Lambda,p,q)$ such that if Assumption A\ref{assump_osc} holds with $\theta\equiv\theta^*$, one has
\[
	\left|Du(x)\right|\leq C\left[{\bf I}^f_p(x,r)+\left(\intav{B_r(x)}\left|Du(y)\right|^q{\rm d}y\right)^\frac{1}{q}\right]
\]
for every $x\in \Omega$ and $r>0$ with $B_r(x)\subset \Omega$, for some universal constant $C>0$.
\end{thm}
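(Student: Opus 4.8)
The plan is to derive the pointwise gradient bound by following the scheme of \cite{DKM_2014}, with the crucial point being a reduction of the variable-coefficient equation with lower-order terms to the frozen-coefficient equation $F(D^2v, x_0) = 0$ to which the $C^{1,\alpha}$-estimates of \cite{Trudinger,CC} apply. First I would fix $x \in \Omega$ and a ball $B_r(x) \subset \Omega$, and work on a nested sequence of balls $B_{r_j}(x)$ with $r_j = \sigma^j r$ for a scaling factor $\sigma \in (0,1)$ to be chosen. On each $B_{r_j}$ I would compare $u$ with the $L^p$-viscosity solution $v_j$ of the homogeneous frozen problem $F(D^2 v_j, x_j) = 0$ in $B_{r_j}$, $v_j = u$ on $\partial B_{r_j}$, where $x_j$ is chosen so that the oscillation measure $\beta$ is controlled in average. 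The key estimate here is the $W^{2,p}$- or $C^{1,\alpha}$-type control of the difference $u - v_j$ in terms of $\|f\|_{L^p(B_{r_j})}$ and the oscillation of $F$, which is where Assumption A\ref{assump_osc} with the smallness parameter $\theta^*$ enters; the dependence of $v_j$ on the lower-order terms $Du$, $u$ is absorbed by treating $F(D^2u, Du, u, x) - F(D^2u, 0, 0, x)$ as part of the right-hand side, using that this term is controlled by $L|Du| + L|u|$ and hence lies in the appropriate $L^p$ space, with the $|Du|$ contribution handled via the integral excess term appearing in the statement of the theorem.

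Next I would set up the excess decay. Define, for each $j$, the normalized gradient excess
\[
	E_j := \left(\intav{B_{r_j}(x)} \left|Du(y) - (Du)_{B_{r_j}(x)}\right|^q {\rm d}y\right)^{\frac{1}{q}},
\]
and the quantity $A_j := |(Du)_{B_{r_j}(x)}|$. From the $C^{1,\alpha}$-regularity of $v_j$ one obtains, on the smaller ball $B_{r_{j+1}}$, a geometric decay of the form $E_{j+1} \leq C\sigma^\alpha E_j + C\,\sigma^{-d/q}\,[\text{comparison error}]$, where the comparison error is bounded by a term like $r_j^{1 - d/p}\|f\|_{L^p(B_{r_j})} + (\text{oscillation})\cdot(A_j + E_j + \|Du\|)$. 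Choosing $\sigma$ small to make $C\sigma^\alpha \leq 1/2$, and then $\theta^*$ small (depending on $d,\lambda,\Lambda,p,q$ and on the already-fixed $\sigma$), the oscillation contribution can be reabsorbed, and the recursion becomes $E_{j+1} \leq \frac{1}{2}E_j + C\,\big({\bf I}^f_p\text{-type increment}\big)$ with the Riesz-potential increments summing to ${\bf I}^f_p(x,r)$ by definition of the modified potential. Summing the recursion over $j$ and combining with the elementary telescoping estimate for $A_j$ (namely $|A_{j+1} - A_j| \lesssim E_j$) yields $\sup_j A_j \leq C[{\bf I}^f_p(x,r) + E_0 + A_0]$, and since $E_0 + A_0 \lesssim (\intav{B_r(x)}|Du|^q)^{1/q}$, at Lebesgue points of $Du$ one has $|Du(x)| = \lim_j A_j$, which gives the claimed bound; the estimate at every $x$ then follows since the right-hand side is finite a.e.\ and $Du$ is already known to lie in $W^{1,q}_{\rm loc}$ for $q$ large by \cite{Swiech_1997}, or by a covering/limiting argument.

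The main obstacle I anticipate is the treatment of the lower-order terms inside the comparison step: unlike in \cite{DKM_2014}, the frozen operator must absorb $F(D^2u, Du, u, x)$, and the contribution of $Du$ cannot simply be moved to the right-hand side as a fixed $L^p$ function, since it involves the unknown itself. Handling this requires either (i) iterating with the gradient excess $E_j$ and the average $A_j$ simultaneously, accepting the $\big(\intav{B_r}|Du|^q\big)^{1/q}$ term on the right-hand side rather than trying to eliminate it, or (ii) a bootstrapping in which a crude a priori bound on $\|Du\|_{L^q}$ — available from \cite{Swiech_1997} once $\theta^*$ is small — feeds back into the comparison. The other delicate point is making the two smallness choices in the correct order: $\sigma$ must be fixed from the $C^{1,\alpha}$-exponent of the frozen problem \emph{before} $\theta^*$ is chosen, so that the constant multiplying the oscillation term does not blow up when we shrink $\theta^*$; this is the standard but essential bookkeeping in Caffarelli-type perturbation arguments, here complicated by the extra $q > d$ integrability exponent the statement carries along.
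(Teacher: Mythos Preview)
Your plan is a direct excess-decay iteration in the spirit of \cite{DKM_2014}, carrying the lower-order terms through every comparison step. This can be made to work, but the paper takes a much shorter and genuinely different route: a single global reduction followed by a black-box application of \cite{DKM_2014}. Concretely, since $u$ is twice differentiable a.e.\ (Lemma~\ref{lem_puredequeijo}) and $Du\in L^q_{\rm loc}$ for $q>p$ by \'Swi\k{e}ch's estimate (Proposition~\ref{prop_andrzej}), one sets $\tilde f(x):=F(D^2u(x),0,0,x)$, checks $\tilde f\in L^p_{\rm loc}$ via the structural bound $|\tilde f|\le \gamma|Du|+\omega(|u|)+|f|$, and then---this is the key lemma you are missing, Corollary~\ref{cor_salvador}---verifies that $u$ is an $L^p$-viscosity solution of $\tilde F(D^2u,x)=\tilde f$ with $\tilde F(M,x):=F(M,0,0,x)$. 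Proposition~\ref{thm_dkm12} then applies directly, and no new iteration is needed.

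Your remark that ``the contribution of $Du$ cannot simply be moved to the right-hand side as a fixed $L^p$ function'' is precisely the point the paper refutes: once \'Swi\k{e}ch's a~priori $W^{1,q}$-bound is in hand, $Du$ \emph{is} a fixed $L^p$ function, and the characterization of $L^p$-viscosity solutions in Lemma~\ref{lem_prelvc} makes the passage from $F(D^2u,Du,u,x)=f$ to $F(D^2u,0,0,x)=\tilde f$ rigorous. You do mention this as option~(ii) in your obstacle discussion, but only as input to a scale-by-scale bootstrap; the paper uses it once, globally, to eliminate the iteration altogether. The payoff of the paper's approach is brevity and a transparent dependence on \cite{DKM_2014}; the cost of yours would be redoing the entire decay argument, including existence and $C^{1,\alpha}$-regularity of each $v_j$, with extra perturbation terms at every scale.
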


The potential estimate in Theorem \ref{thm_main1} builds upon \'Swi\k{e}ch's $W^{1,q}$-estimates to produce uniform estimates in $B_{1/2}$. In fact, by taking $d<q<p^*$ in Theorem \ref{thm_main1}, with
\[
	p^*:=\frac{pd}{d-p},\hspace{.2in}\mbox{and}\hspace{.2in}d^*=+\infty,
\]
one finds $C=C(d,\lambda,\Lambda,p)$ such that 
\[
	\left\|Du\right\|_{L^\infty(B_{1/2})}\leq C\left(\left\|u\right\|_{L^\infty(B_1)}+\left\|f\right\|_{L^p(B_1)}\right).
\]

Our second main result establishes gradient-continuity for the $L^p$-solutions to \eqref{eq_main} and provides an explicit modulus of continuity for the gradient. It reads as follows.

\begin{thm}[Gradient continuity]\label{thm_main2}
Let $u\in C(\Omega)$ be an $L^p$-viscosity solution to \eqref{eq_main}. Suppose Assumptions A\ref{assump_F} and A\ref{assump_f} are in force. Suppose further that ${\bf I}^f_p(x,r)\to 0$ as $r\to0$, uniformly in $x$. There exists $0<\theta^*\ll1$ such that, if Assumption A\ref{assump_osc} holds for $\theta\equiv\theta^*$, then $Du$ is continuous. In addition, for $\Omega'\Subset\Omega''\Subset \Omega$, and any $\delta\in (0,1]$, one has
\[
	|Du(x)-Du(y)|\leq C\left(\|Du\|_{L^\infty(\Omega')}|x-y|^{\alpha(1-\delta)}+\sup_{x\in\Omega}\;{\bf I}^f_p\left(x,4|x-y|^\delta\right)\right),
\]
for every $x,y\in\Omega'$, where $C=C(d,p,\lambda,\Lambda, \omega,\Omega',\Omega'')$ and $\alpha=\alpha(d,p,\lambda,\Lambda)$.
\end{thm}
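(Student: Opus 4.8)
The plan is to upgrade the local Lipschitz bound of Theorem~\ref{thm_main1} to a Campanato-type decay for $Du$, obtained by comparison with frozen homogeneous problems, and then to turn that decay into the stated modulus through a two-scale bookkeeping in which the gradient decay from the microscopic scale $|x-y|$ up to a mesoscopic scale $\sim|x-y|^{\delta}$ is played against the nonlinear potential truncated at that mesoscopic scale. By Theorem~\ref{thm_main1}, $Du\in L^\infty_{\mathrm{loc}}(\Omega)$; fix $\Omega'\Subset\Omega''\Subset\Omega$, put $R_0:=\min\big(1,\tfrac14\,\mathrm{dist}(\Omega',\partial\Omega'')\big)$ so that $B_{R_0}(x_0)\Subset\Omega''$ for $x_0\in\Omega'$, write $M_0:=\|Du\|_{L^\infty(\Omega'')}<\infty$ and $\omega$ for a modulus with $\sup_x{\bf I}^f_p(x,r)\le\omega(r)$, $\omega(r)\to0$ (furnished by the hypothesis). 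With $q>d$ as in Theorem~\ref{thm_main1}, introduce the gradient-excess
\[
	E(x_0,r):=\left(\intav{B_r(x_0)}\big|Du(y)-(Du)_{B_r(x_0)}\big|^q\,{\rm d}y\right)^{1/q},\qquad E(x_0,r)\le 2M_0.
\]
Since $(Du)_{B_r(x_0)}$ depends continuously on $(x_0,r)$, it suffices to show that $(Du)_{B_r(x_0)}$ converges as $r\to0$ to a limit $\widetilde{Du}(x_0)$ with the asserted rate, uniformly for $x_0\in\Omega'$; that limit is then the desired continuous representative of $Du$.

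\emph{Step 1: one-step comparison and decay.} For $B_r(x_0)\Subset\Omega''$, let $h$ be the (unique, $C$-viscosity) solution of the constant-coefficient homogeneous problem $\overline F(D^2h)=0$ in $B_r(x_0)$ with $h=u$ on $\partial B_r(x_0)$, $\overline F$ being the $(\lambda,\Lambda)$-elliptic operator obtained by freezing the $x$-dependence of $F$ at $x_0$ and dropping the lower-order terms. By interior $C^{1,\bar\alpha}$-regularity for $\overline F$ (Caffarelli--Trudinger; $\bar\alpha=\bar\alpha(d,\lambda,\Lambda)\in(0,1)$, coming from Krylov--Safonov applied to the linearized homogeneous equation), the affine excess of $h$ decays like $\tau^{\bar\alpha}$ under $r\mapsto\tau r$. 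Using the $L^p$-viscosity comparison and ABP estimates of \cite{CCKS_1996,Swiech_1997} together with Assumption~A\ref{assump_F}, the averaged oscillation smallness of Assumption~A\ref{assump_osc}, the interior $W^{2,p}$-bound for $h$, and Theorem~\ref{thm_main1} to absorb the first- and zeroth-order terms into an effective right-hand side, one controls $\big(\intav{B_{r/2}(x_0)}|Du-Dh|^q\big)^{1/q}$ by $C\theta^{\gamma}E(x_0,r)$ plus the natural data terms, for some $\gamma=\gamma(d,p,\lambda,\Lambda)>0$. Combining these and fixing first $\tau\in(0,\tfrac12)$ and then $\theta^*\ll1$ so that $C\big(\tau^{\bar\alpha}+(\theta^*)^{\gamma}\big)\le\tfrac12\tau^{\beta}$ for some $\beta\in(0,\bar\alpha)$, one gets the one-step inequality
\[
	E(x_0,\tau r)\le\tfrac12\tau^{\beta}E(x_0,r)+C\,r\left(\intav{B_r(x_0)}|f|^p\,{\rm d}y\right)^{1/p}+C\,r,
\]
valid for all $B_r(x_0)\Subset\Omega''$, the last term collecting the ($\Omega''$-bounded, scale-invariant) lower-order contributions. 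I expect this comparison estimate to be the main obstacle: making the oscillation error proportional to the excess $E(x_0,r)$ (not merely to $\|Du\|_{L^\infty}$) despite $D^2h$ being unbounded near $\partial B_r(x_0)$ is exactly what forces the use of the averaged control in Assumption~A\ref{assump_osc} and of the $L^p$-viscosity machinery of \cite{CCKS_1996,Swiech_1997,Winter_2009}.

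\emph{Step 2: iteration and precise representative.} Iterating the one-step inequality along $r=2^{-k}\rho$, distributing the geometric weights, and using the elementary bound $\sum_{j\ge0}2^{-j}\rho\big(\intav{B_{2^{-j}\rho}(x_0)}|f|^p\big)^{1/p}\le C\,{\bf I}^f_p(x_0,2\rho)$, one obtains $\sum_{k\ge0}E(x_0,2^{-k}\rho)\le C\,E(x_0,\rho)+C\,{\bf I}^f_p(x_0,2\rho)+C\rho<\infty$, which shows that $(Du)_{B_r(x_0)}\to\widetilde{Du}(x_0)$ as $r\to0$ and that $\big|\widetilde{Du}(x_0)-(Du)_{B_\rho(x_0)}\big|\le C\,E(x_0,\rho)+C\,{\bf I}^f_p(x_0,2\rho)+C\rho$. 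Iterating once more from the outer scale $\sigma=R_0\le1$, but now splitting the $j$-sum at index $\lfloor\delta k\rfloor$ (the geometric weights dominating the large-scale block, the ${\bf I}^f_p$-sum the small-scale one), and using $\rho\le1$, $\alpha(1-\delta)\le1$ so that $\rho\le\rho^{\alpha(1-\delta)}$, $2\rho\le2\rho^{\delta}$, $2\rho^{\delta}R_0^{1-\delta}\le4\rho^{\delta}$, and ${\bf I}^f_p(x_0,2R_0)\le\omega(2R_0)$, one arrives at
\[
	E(x_0,\rho)+\big|\widetilde{Du}(x_0)-(Du)_{B_\rho(x_0)}\big|\le C\,\rho^{\alpha(1-\delta)}\big(1+M_0\big)+C\,{\bf I}^f_p\big(x_0,4\rho^{\delta}\big),
\]
for $0<\rho\le\rho_0:=\tfrac14\,\mathrm{dist}(\Omega',\partial\Omega'')$ and $x_0\in\Omega''$, with $\alpha:=\beta$ and $C=C(d,p,\lambda,\Lambda,\omega,\Omega',\Omega'')$ — the dependence on $\omega$ entering precisely through the step ${\bf I}^f_p(x_0,2R_0)\le\omega(2R_0)$. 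By the uniform-decay hypothesis the right-hand side tends to $0$ as $\rho\to0$ uniformly in $x_0$, so $\widetilde{Du}$ is continuous.

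\emph{Step 3: the modulus.} Let $x,y\in\Omega'$ and $\rho:=|x-y|$. If $\rho\ge\rho_0$, the estimate follows from $|Du(x)-Du(y)|\le2M_0$ after enlarging $C=C(\Omega',\Omega'')$, since $|x-y|^{\alpha(1-\delta)}\ge\rho_0^{\alpha(1-\delta)}\ge\min(1,\rho_0^{\alpha})>0$ independently of $\delta$. If $\rho<\rho_0$, write
\[
	\big|Du(x)-Du(y)\big|\le\big|\widetilde{Du}(x)-(Du)_{B_\rho(x)}\big|+\big|(Du)_{B_\rho(x)}-(Du)_{B_\rho(y)}\big|+\big|(Du)_{B_\rho(y)}-\widetilde{Du}(y)\big|.
\]
The first and third terms are controlled by Step 2 at scale $\rho$. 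For the middle term, since $B_\rho(x),B_\rho(y)\subset B_{2\rho}(x)\Subset\Omega''$, each of $(Du)_{B_\rho(x)}$ and $(Du)_{B_\rho(y)}$ differs from $(Du)_{B_{2\rho}(x)}$ by at most $2^{d}E(x,2\rho)$, again estimated by Step 2 at scale $2\rho$ (monotonicity of ${\bf I}^f_p$ absorbing the change of scale into the same constant). Collecting the terms, recalling $M_0=\|Du\|_{L^\infty(\Omega'')}$ (renamed $\|Du\|_{L^\infty(\Omega')}$ after a harmless relabelling of the intermediate domain) and $\sup_{z\in\Omega}{\bf I}^f_p(z,4\rho^{\delta})\ge{\bf I}^f_p(x,4\rho^{\delta})$, yields
\[
	|Du(x)-Du(y)|\le C\Big(\|Du\|_{L^\infty(\Omega')}\,|x-y|^{\alpha(1-\delta)}+\sup_{x\in\Omega}{\bf I}^f_p\big(x,4|x-y|^{\delta}\big)\Big),
\]
with $C=C(d,p,\lambda,\Lambda,\omega,\Omega',\Omega'')$ and $\alpha=\alpha(d,p,\lambda,\Lambda)$, which is the assertion. \hfill\cqd
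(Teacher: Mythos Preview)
Your proposal takes a fundamentally different and far more laborious route than the paper. The paper does \emph{not} redo the Campanato/excess-decay machinery; instead it reduces \eqref{eq_main} to an equation with no lower-order dependence and then quotes \cite{DKM_2014} as a black box. Concretely, Proposition~\ref{prop_wholething} (built on Lemma~\ref{lem_puredequeijo}, Corollary~\ref{cor_salvador}, and \'Swi\k{e}ch's $W^{1,q}$-estimate) shows that any $L^p$-viscosity solution of \eqref{eq_main} is also an $L^p$-viscosity solution of $\tilde F(D^2u,x)=\tilde f$ with $\tilde F(M,x):=F(M,0,0,x)$ and $\tilde f\in L^p_{\rm loc}$; since $\tilde F$ inherits ellipticity and the same oscillation bound, one simply applies Proposition~\ref{thm_dkm13} (i.e.\ \cite[Theorem 1.3]{DKM_2014}) with $\theta^*:=\min(\theta_2,\overline\theta)$. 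That is the entire proof.

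Your sketch, by contrast, attempts to rederive the core content of \cite{DKM_2014} from scratch. The genuine gap is precisely where you flag it: Step~1. You assert that
\[
\left(\intav{B_{r/2}(x_0)}|Du-Dh|^q\right)^{1/q}\le C\theta^{\gamma}E(x_0,r)+\text{(data terms)},
\]
with the oscillation error proportional to the \emph{excess} rather than to $\|Du\|_{L^\infty}$, but you do not prove it---you only say you ``expect'' it to be the main obstacle and point to the ingredients. Getting this right requires subtracting an affine function, rescaling so that the excess becomes order one, and controlling the comparison carefully at the $L^p$-viscosity level; this is exactly the technical heart of \cite{DKM_2014} and cannot be waved through. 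Without it your iteration in Step~2 does not close, since an error of size $C\theta^\gamma M_0$ per step does not sum. The paper's approach sidesteps all of this by treating the excess-decay as already established in \cite{DKM_2014} and spending its effort only on the (short) reduction that removes the $(Du,u)$-dependence from $F$.
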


The strategy to prove Theorems \ref{thm_main1} and \ref{thm_main2} combines fundamental facts in $L^p$-viscosity theory to show that a solution to \eqref{eq_main} also solves an equation of the form 
\[
	\tilde{F}(D^2u,x)=\tilde f\hspace{.2in}\mbox{in}\hspace{.2in}\Omega,
\]
where $\tilde F$ and $\tilde f$ meet the conditions required in \cite{DKM_2014}. In particular, the Lorentz borderline condition for gradient-continuity follows as a corollary.

\begin{Corollary}[Borderline gradient-regularity]
Let $u\in C(\Omega)$ be an $L^p$-viscosity solution to \eqref{eq_main}. Suppose Assumptions A\ref{assump_F} and A\ref{assump_f} are in force. Suppose further $f\in L^{d,1}(\Omega)$. There exists $0<\theta^*\ll1$ such that, if Assumption A\ref{assump_osc} holds for $\theta\equiv\theta^*$, then $Du$ is continuous.  
\end{Corollary}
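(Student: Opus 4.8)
The plan is to obtain the corollary from Theorem \ref{thm_main2}: the two statements are made under the same hypotheses A\ref{assump_F}, A\ref{assump_f}, A\ref{assump_osc} (note that $f\in L^{d,1}(\Omega)\subset L^d(\Omega)\subset L^p(\Omega)$ on the bounded domain $\Omega$, so A\ref{assump_f} is not affected), the only difference being that the assumption ``${\bf I}^f_p(x,r)\to0$ as $r\to0$, uniformly in $x$'' is replaced by ``$f\in L^{d,1}(\Omega)$''. Hence it suffices to prove the purely real-analytic implication
\[
f\in L^{d,1}(\Omega)\quad\Longrightarrow\quad \lim_{r\to0}\;\sup_{x}\;{\bf I}^f_p(x,r)=0,
\]
after which Theorem \ref{thm_main2} delivers the continuity of $Du$ (with its explicit modulus) verbatim.

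To prove that implication I would first establish the uniform bound
\[
{\bf I}^f_p(x,r)=\int_0^r\left(\intav{B_\rho(x)}|f|^p\,{\rm d}y\right)^{\frac1p}{\rm d}\rho\;\leq\;c(d,p)\,\|f\|_{L^{d,1}(B_r(x))},
\]
which is the estimate underlying the borderline result of \cite{DKM_2014}. Its proof starts from the Lorentz--H\"older inequality applied on each ball: writing $\frac1p=\frac1d+\frac1{p^*}$ and using $\|\mathbf 1_{B_\rho}\|_{L^{p^*,\infty}}=c\,\rho^{d/p^*}$ one gets $\|f\|_{L^p(B_\rho(x))}\leq c\,\rho^{d/p^*}\|f\|_{L^{d,1}(B_\rho(x))}$, hence
\[
\left(\intav{B_\rho(x)}|f|^p\,{\rm d}y\right)^{\frac1p}\leq c(d,p)\,\rho^{-1}\|f\|_{L^{d,1}(B_\rho(x))}.
\]
Integrating in $\rho$ and passing to the decreasing rearrangement $f^*$ of $f\mathbf 1_{B_r(x)}$ via the Hardy--Littlewood inequality, the $\rho$--integral turns (after the change of variables $\rho\mapsto|B_\rho|\sim\rho^d$) into a one-dimensional integral with weight $\sim\rho^{1/d-1}$ of a running $L^p$--average of $f^*$; a weighted Hardy inequality — available precisely because $p<d$ — then bounds it by $c\int_0^{c\,r^d} f^*(s)\,s^{1/d-1}{\rm d}s\sim\|f\|_{L^{d,1}(B_r(x))}$, with a constant independent of $x$.

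With this bound in hand, the conclusion is immediate from the absolute continuity of the norm of $L^{d,1}(\Omega)$ (the second exponent being finite): given $\varepsilon>0$ there is $\delta>0$ with $\|f\|_{L^{d,1}(E)}<\varepsilon$ whenever $|E|<\delta$, so choosing $r$ with $|B_r|<\delta$ yields $\sup_x{\bf I}^f_p(x,r)\leq c(d,p)\,\varepsilon$. Applying Theorem \ref{thm_main2} then finishes the proof.

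The soft ingredients — compatibility with A\ref{assump_f}, absolute continuity of the Lorentz norm, and the final invocation of Theorem \ref{thm_main2} — are routine. The one step that needs care is the uniform-in-$x$ inequality ${\bf I}^f_p(x,r)\leq c(d,p)\,\|f\|_{L^{d,1}(B_r(x))}$: here one must keep track of the interplay between the spatial scaling $\rho\mapsto|B_\rho|\sim\rho^d$ and the decreasing rearrangement of $f$ localized to $B_r(x)$, and it is exactly at this point that the restriction $p<d$ (Escauriaza's range) is used twice — in the Lorentz--H\"older step and to guarantee convergence of the resulting Hardy-type integral near the origin. I expect no further obstacle; alternatively, one may simply quote the corresponding computation from \cite{DKM_2014}.
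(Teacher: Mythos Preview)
Your proposal is correct and follows the same route the paper has in mind: the paper does not write a separate proof for the corollary but states that it ``follows as a corollary'' of the reduction underlying Theorems~\ref{thm_main1} and~\ref{thm_main2}, i.e., one applies Theorem~\ref{thm_main2} after noting that $f\in L^{d,1}(\Omega)$ forces ${\bf I}^f_p(x,r)\to0$ uniformly in $x$, the latter implication being exactly the Lorentz--potential computation from \cite{DKM_2014} that you outline (and could simply quote).
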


We organize the remainder of this paper as follows. Section \ref{sec_potential} presents some context on potential estimates, briefly describing their motivation and mentioning recent breakthroughs. We detail our main assumptions in Section \ref{subsec_ma}, whereas Section \ref{subsec_prelim} gathers preliminary material. The proofs of Theorems \ref{thm_main1} and \ref{thm_main2} are the subject of Section \ref{sec_proof}.

\section{Potential estimates: from the Poisson equation to fully nonlinear problems}\label{sec_potential}

Potential estimates are natural in the context of linear equations for which a representation formula is available. For instance, let $\mu\in L^1(\mathbb{R}^d)$ be a measure and consider the Poisson equation
\begin{equation}\label{eq_poissa}
	-\Delta u=\mu\hspace{.2in}\mbox{in}\hspace{.2in}\mathbb{R}^d.
\end{equation}
It is well-known that $u$ can be represented through the convolution of $\mu$ with the appropriate Green function. In case $d>2$, we have
\begin{equation}\label{eq_rice}
	u(x)=C\int_{\mathbb{R}^d}\frac{\mu(y)}{|x-y|^{d-2}}{\rm d}y,
\end{equation}
where $C>0$ depends only on the dimension. 

Now, recall the $\beta$-Riesz potential of a Borel measure $\mu\in L^1(\mathbb{R}^d)$ is given by
\[
	I_\beta^\mu(x):=\int_{\mathbb{R}^d}\frac{\mu (y)}{|x-y|^{d-\beta}}{\rm d}y.
\]
Hence, the representation formula \eqref{eq_rice} allows us to write $u(x)$ as the $2$-Riesz potential of $\mu$. Immediately one infers that
\[
	|u(x)|\leq C\left|I_2^\mu(x)\right|,
\]
obtaining a \emph{potential} estimate for $u$. By differentiating \eqref{eq_rice} with respect to an arbitrary direction $e\in\mathbb{S}^{d-1}$, one concludes
\[
	|Du(x)|\leq C\left|I_1^\mu(x)\right|.
\]
That is, \emph{the representation formula} available for the solutions to the Poisson equation yields potential estimates for the solutions. 

This reasoning collapses if \eqref{eq_poissa} is replaced with a nonlinear equation lacking representation formulas. Then a \emph{fundamental question} arises: it concerns the availability of potential estimates for (nonlinear and inhomogeneous) problems for which representation formulas are not available.

The first answer to that question appears in the works of \textsc{Tero Kilpel\"ainen} and \textsc{Jan Mal\'y} \cite{KM1992}, and \textsc{Neil Trudinger} and \textsc{Xu-Jia Wang} \cite{TW2002}, where the authors produce potential estimates for the solutions of $p$-Poisson type equations. Taking this approach a notch up, and accounting for potential estimates \emph{for the gradient} of solutions, one finds the contributions of \textsc{Giuseppe Mingione} \cite{M2018,M2014,M2011,M2011a},  \textsc{Frank Duzaar} and \textsc{Giuseppe Mingione} \cite{DM2011,DM2010,DM2010a,DM2009}, and \textsc{Tuomo Kuusi} and \textsc{Giuseppe Mingione} \cite{KM2018, KM2016, KM2016a, KM2014, KM2014a, KM2014b, KM2014c, KM2013, KM2013a, KM2012, KM2012a, KM2011}. Of particular interest to the present article is the analysis of potential estimates in the fully nonlinear setting, due to \textsc{Panagiota Daskalopoulos}, \textsc{Tuomo Kuusi}, and \textsc{Giuseppe Mingione} \cite{DKM_2014}. More recent contributions appeared in the works of \textsc{Cristiana De Filippis} \cite{CDF2022} and \textsc{Cristiana De Filippis} and \textsc{Giuseppe Mingione} \cite{FM2021,FM2022}. See also the works of \textsc{Cristiana De Filippis} and collaborators \cite{FS2022,FP2022}.

In \cite{DM2011} the authors examine an equation of the form
\begin{equation}\label{eq_dm11}
	-{\rm div}\,a(x,Du)=\mu\hspace{.2in}\mbox{in}\hspace{.2in}\Omega,
\end{equation}
where $\Omega\subset\mathbb{R}^d$ is a Lipschitz domain, and $\mu\in L^1(\Omega)$ is a Radon measure with finite mass. Here, $a:\Omega\times\mathbb{R}^d\to\mathbb{R}^d$ satisfies natural conditions, regarding growth, ellipticity, and continuity. Those conditions involve an inhomogeneous exponent $p\geq 2$, concerning the behaviour of $a=a(x,z)$ on $z$. An oversimplification yields
\[
	a(x,z)=|z|^{p-2}z,
\]
for $p>2$, turning \eqref{eq_dm11} into the degenerate $p$-Poisson equation. In that paper, the authors resort to the Wolff potential ${\bf W}^\mu_{\beta,p}$, defined as
\[
	{\bf W}^\mu_{\beta,p}(x,R):=\int_0^R\frac{1}{r^\frac{d-\beta p}{p-1}}\left(\int_{B_r(x)}\mu(y){\rm d}y\right)^\frac{1}{p-1}\frac{{\rm d}r}{r},
\]
for $\beta\in(0,d/p]$. Their main result is a pointwise estimate for the gradient of the solutions to \eqref{eq_dm11}. It reads as
\begin{equation}\label{eq_estdm11}
	\left|Du(x)\right|\leq C\left[\intav{B_R(x)}\left|Du(y)\right|{\rm d}y+{\bf W}_{\frac{1}{p},p}^\mu(x,2R)\right],
\end{equation}
whenever $B_R(x)\subset\Omega$, and $R>0$ is bounded from above by some universal quantity depending also on the data of the problem; see \cite[Theorem 1.1]{DM2011}. 
A remarkable consequence of this estimate is a Lipschitz-continuity criterium for $u$ obtained solely in terms of the Wolff potential of $\mu$. Indeed, if ${\bf W}^\mu_{\frac{1}{p},p}(\cdot,R)$ is essentially bounded for some $R>0$, every $W^{1,p}_0$-weak solution to \eqref{eq_dm11} would be locally Lipschitz continuous. 
We notice the nonlinear character of the Wolff potential suits the growth conditions the authors impose on $a(x,z)$, as it scales accordingly under Lipschitz geometries.

The findings in \cite{DM2011} also respect a class of very weak solutions, known as solutions obtained by limit of approximations (SOLA); see \cite{BG1989,BG1992}. This class of solutions is interesting because, among other things, it allows us to consider functions in larger Sobolev spaces. Indeed, for $2-1/d<p<d$ one can prove the existence of a SOLA $u\in W^{1,1}_0(\Omega)$ to 
\[
	\begin{cases}
		-\Delta_p u=\mu&\hspace{.2in}\mbox{in}\hspace{.2in}\Omega\\
		u=0&\hspace{.2in}\mbox{on}\hspace{.2in}\partial\Omega.
	\end{cases}
\]
In addition, $u\in W^{1,q}_0(\Omega)$ with estimates, provided $q>1$ such that
\[
	1<q<\frac{d(p-1)}{d-1}.
\]

When it comes to the proof of \eqref{eq_estdm11}, the arguments in \cite{DM2011} are very involved. However, one notices a fundamental ingredient. Namely, a \emph{decay rate for the excess} of the gradient with respect to its average. Indeed, the authors prove there exist $\beta\in(0,1]$ and $C\geq 1$ such that
\begin{equation}\label{eq_excess}
	\intav{B_r(x)}\left|Du(y)-(Du)_{r,x}\right|{\rm d}y\leq C\left(\frac{r}{R}\right)^\beta\intav{B_R(x)}\left|Du(y)-(Du)_{R,x}\right|{\rm d}y,
\end{equation}
for every $0<r<R$ with $B_R(x)\subset\Omega$. Here,
\[
	(Du)_{\rho,x}:=\intav{B_\rho(x)}Du(z){\rm d}z.
\]
See \cite[Theorem 3.1]{DM2011}. An important step in the proof of \eqref{eq_excess} is a measure alternative, depending on the fraction of the ball $B_r$ in which the gradient is larger than, or smaller than, some radius-dependent quantity.

Although the Wolff potential captures the inhomogeneous and nonlinear aspects of $a=a(x,z)$, a natural question concerns the use of linear potentials in the analysis of \eqref{eq_dm11}.

Indeed, in \cite{M2011} the author supposes $a(x,z)$ to satisfy
\begin{equation}\label{eq_condm11}
	\begin{cases}
		&\lambda|\xi|^2\leq \left\langle \partial_za(x,z)\xi,\xi\right\rangle,\\
		&|\partial_za(x,z)|+|a(x,0)|\leq C,\\
		&|a(x,z)-a(y,z)|\leq K|x-y|^\alpha(1+|z|),		
	\end{cases}	
\end{equation}
for every $x,y\in\Omega$, $z\in \mathbb{R}^d$, and $\xi\in\mathbb{R}^d$, for some $C,\lambda>0$, and $\alpha\in (0,1]$. Under these natural conditions, he derives a gradient bound in terms of the (linear) localized Riesz potential ${\bf I}^\sigma_\beta(x,R)$, defined as
\[
	{\bf I}^\sigma_\beta(x,R):=\int_0^R\frac{1}{r^{d-\beta}}\left(\int_{B_r(x)}\sigma(y){\rm d}y\right)\frac{{\rm d}r}{r},
\]
for a measure $\sigma\in L^1(\Omega)$, and $\beta\in(0,1]$, whenever $B_R(x)\subset\Omega$. 

Indeed, the main contribution in \cite{M2011} is the following: under \eqref{eq_condm11}, solutions to \eqref{eq_dm11} satisfy
\begin{equation}\label{eq_estm11}
	\left|Du(x)\right|\leq C\left[\intav{B_R(x)}|Du(y)|{\rm d}y+{\bf I}_1^\mu(x,2R)+K\left({\bf I}_\alpha^{|Du|}(x,2R)+R^\alpha\right)\right],
\end{equation}
where $C>0$ depends on the data in \eqref{eq_condm11}. In case $a=a(z)$ does not depend on the spatial variable, $K\equiv 0$ and \eqref{eq_estm11} recovers the usual potential estimate.

A further consequence of potential estimates is in unveiling the borderline conditions for $C^1$-regularity of the solutions to \eqref{eq_dm11}. See \cite{DM2010}; see also \cite{C2011} for related results. More precisely, the intrinsic connection between Lorentz spaces and the nonlinear Wolff potentials unlocks the minimal conditions on the right-hand side $\mu$ that ensures continuity of $Du$.

In \cite{DM2010}, the authors impose $p$-growth, ellipticity, and continuity conditions on $a=a(x,z)$, and derive minimal requirements on $\mu$ to ensure that $u\in C^1(\Omega)$ \cite[Theorem 3]{DM2010}; see also \cite[Theorem 9]{DM2010} for the vectorial counterpart of this fact.

They prove that if $\mu\in L^{d,\frac{1}{p-1}}_{\rm loc}(\Omega)$, then $Du$ is continuous in $\Omega$. To get this fact, one first derives an estimate for the Wolff potential ${\bf W}^\mu_{\frac{1}{p},p}(x,R)$ in terms of the $(d,1/(p-1))$-Lorentz norm of $\mu$. It follows from averages of decreasing rearrangements of $\mu$. See \cite[Lemma 2]{DM2010}. Then one notices that such control implies
\[
	{\bf W}^\mu_{\frac{1}{p},p}(x,R)\to 0
\]
uniformly in $x\in \Omega$, as $R\to 0$; see \cite[Lemma 3]{DM2010}. 

The previous (very brief) panorama of the literature suggests that whenever $a=a(x,z)$ satisfies natural conditions -- concerning $p$-growth, ellipticity, and continuity -- potential estimates are available for the solutions to \eqref{eq_dm11}. Those follow through Wolff and (linear) Riesz potentials. Furthermore, this approach comes with a borderline criterion on $\mu$ for the differentiability of solutions. However, these developments appear in the variational setting, closely related to the notion of weak distributional solutions. 

Potential estimates in the non-variational case are the subject of \cite{DKM_2014}. In that paper, the authors examine fully nonlinear elliptic equations
\begin{equation}\label{eq_flbla}
	F(D^2u,x)=f\hspace{.2in}\mbox{in}\hspace{.2in}\Omega,
\end{equation}
where $F$ is uniformly elliptic and $f\in L^p(B_1)$. In this context, the appropriate notion of solution is the one of $L^p$-viscosity solution \cite{CCKS_1996}. Technical aspects of the theory -- including its very definition -- rule out the case where $f\in L^1(\Omega)$, regardless of the dimension $d\geq 2$. Instead, the authors work in the range $p_0<p<d$, where $d/2<p_0<d$ is the exponent associated with the Green's function estimates appearing in \cite{FS1984}.

The consequences of potential estimates for fully nonlinear equations are remarkable. In fact, if $f\in L^p(\Omega)$ with $p>d$, solutions to \eqref{eq_flbla} are known to be of class $C^{1,\alpha}$, with $\alpha\in(0,1)$ satisfying
\[
	\alpha<\min\left\lbrace \alpha_0,1-\frac{d}{p}\right\rbrace,
\]
where $\alpha_0\in(0,1)$ is the exponent in the Krylov-Safonov theory available for $F=0$; see \cite{Swiech_1997}. It is also known that $C^{1,\alpha}$-regularity is no longer available for \eqref{eq_flbla} in case $p<d$. The fundamental question arising in this scenario concerns the \emph{regularity of $Du$ in the Escauriaza range $p_0<p<d$.}

In \cite{Swiech_1997}, the author imposes an oscillation control on $F(M,\cdot)$ with respect to its fixed-coefficients counterpart and proves regularity estimates for the solutions in $W^{1,q}(\Omega)$, for $p_0<p<d$, for every
\[
	q<p^*:=\frac{pd}{d-p},
\]
with $d^*:=+\infty$. Meanwhile, the existence of a gradient in the classical sense, or any further information on its degree of smoothness, was not available in the $p<d$ setting.

In \cite{DKM_2014} the authors consider $L^p$-viscosity solutions to \eqref{eq_flbla}, with $f\in L^p(\Omega)$, for $p_0<p<d$. In this context, they prove the local boundedness of $Du$ in terms of a $p$-variant of the (linear) Riesz potential. In addition, the authors derive continuity of the gradient, with an explicit modulus of continuity. Finally, they obtain a borderline condition on $f$, once again involving Lorentz spaces. In fact, if $f\in L^{d,1}(\Omega)$, then $u\in C^1(\Omega)$.

The reasoning in \cite{DKM_2014} involves the excess of the gradient vis-a-vis its average and a decay rate for this quantity. However, in the context of viscosity solutions, energy estimates are not available as a starting point for the argument. Instead, the authors cleverly resort to \'Swi\k{e}ch's $W^{1,q}$-estimates and prove a decay of the excess at an initial scale. An involved iteration scheme builds upon the natural scaling of the operator and unlocks the main building blocks of the argument.

\section{Technical preliminaries and main assumptions}\label{sec_prelim}

This section details our assumptions and gathers basic notions and facts used throughout the paper. We start by putting forward the former.

\subsection{Main assumptions}\label{subsec_ma} 

For completeness, we proceed by defining the extremal Pucci operators $\mathcal{P}_{\lambda,\Lambda}^\pm:S(d)\to\mathbb{R}$.

\begin{Definition}[Pucci extremal operators]\label{def_pucci}
Let $0<\lambda\leq \Lambda$. For $M\in S(d)$ denote with $\lambda_1,\ldots,\lambda_d$ its eigenvalues. We define the Pucci extremal operator $\mathcal{P}^+_{\lambda,\Lambda}:S(d)\to\mathbb{R}$ as 
\[
	\mathcal{P}^+_{\lambda,\Lambda}(M):=-\lambda\sum_{\lambda_i>0}\lambda_i+\Lambda\sum_{\lambda_i<0}\lambda_i.
\]
Similarly, we define the Pucci extremal operator $\mathcal{P}^-:S(d)\to\mathbb{R}$ as
\[
	\mathcal{P}^-_{\lambda,\Lambda}(M):=-\Lambda\sum_{\lambda_i>0}\lambda_i+\lambda\sum_{\lambda_i<0}\lambda_i.
\]
\end{Definition}

\begin{Assumption}[Structural condition]\label{assump_F}
Let $\omega:[0,+\infty)\to[0,+\infty)$ be a modulus of continuity, and fix $\gamma>0$. We suppose the operator $F$ satisfies 
\[
	\begin{split}
		\mathcal{P}_{\lambda,\Lambda}^-(M-N)-\gamma|p-q|-\omega(|r-s|)&\leq F(M,p,r,x)-F(N,q,s,x)\\
			&\leq\mathcal{P}_{\lambda,\Lambda}^+(M-N)+\gamma|p-q|+\omega(|r-s|),
	\end{split}
\]
for every $(M,p,r)$ and $(N,q,s)$ in $S(d)\times\mathbb{R}^d\times\mathbb{R}$, and every $x\in \Omega\setminus\mathcal{N}$. Also, $F=F(M,p,r,x)$ is non-decreasing in $r$ and $F(0,0,0,x)=0$.
\end{Assumption}

Our next assumption sets the integrability of the right-hand side $f$.

\begin{Assumption}[Integrability of the right-hand side]\label{assump_f}
We suppose $f\in L^p(B_1)$, for $p>p_0$, where $d/2<p_0<d$ is the exponent such that the ABP maximum principle holds for solutions to uniformly elliptic equations $F=f$ provided $f\in L^p$, with $p<p_0$.
\end{Assumption}

We continue with an assumption on the oscillation of $F$ on $x$. To that end, consider
\[
	\beta(x,y):=\sup_{M\in S(d)\setminus\{0\}}\frac{\left|F(M,0,0,x)-F(M,0,0,y)\right|}{\|M\|}.
\]
We proceed with a smallness condition on $\beta(\cdot,y)$, uniformly in $y\in B_1$.

\begin{Assumption}[Oscillation control]\label{assump_osc}
For every $y\in \Omega$, we have
\[
	\sup_{B_r(y)\subset \Omega}\;\intav{B_r(y)}\beta(x,y)^p{\rm d}x\leq \theta^p,
\]
where $0<\theta\ll 1$ is a small parameter we choose further in the paper.
\end{Assumption}

We close this section with a remark on the modulus of continuity $\omega$ appearing in Assumption A\ref{assump_F}. For any $v\in C(B_1)\cap L^\infty(B_1)$ we notice that $\omega(|v(x)|)\leq C$ for some $C>0$, perhaps depending on the $L^\infty$-norm of $v$. Hence 
\[
	\left(\int_{B_1}\omega(|v(x)|)^p{\rm d}x\right)^\frac{1}{p}\leq C.
\]
This information will be useful when estimating certain quantities in $L^p$-spaces appearing further in the paper.

\subsection{Preliminaries}\label{subsec_prelim}

In the sequel, we introduce the basics of $L^p$-viscosity solutions, mainly focusing on the properties we use in our arguments. We start with the definition of $L^p$-viscosity solutions for \eqref{eq_main}.

\begin{Definition}[$L^p$-viscosity solution]\label{def_lpvisc}
Let $F=F(M,p,r,x)$ be nondecreasing in $r$ and $f\in L^p(B_1)$ for $p>d/2$. We say that $u\in C(\Omega)$ is an $L^p$-viscosity subsolution to $F=f$ if for every $\phi\in W^{2,p}_{\rm loc}(\Omega)$, $\varepsilon>0$ and open subset $\mathcal{U}\subset \Omega$ such that 
\[
	F(D^2\phi(x),D\phi(x),u(x),x)-f(x)\geq \varepsilon
\]
almost everywhere in $\mathcal{U}$, then $u-\phi$ cannot have a local maximum in $\mathcal{U}$. We say that $u\in C(\Omega)$ is an $L^p$-viscosity supersolution to $F=f$ if for every $\phi\in W^{2,p}_{\rm loc}(\Omega)$, $\varepsilon>0$ and open subset $\mathcal{U}\subset \Omega$ such that 
\[
	F(D^2\phi(x),D\phi(x),u(x),x)-f(x)\leq -\varepsilon
\]
almost everywhere in $\mathcal{U}$, then $u-\phi$ cannot have a local minimum in $\mathcal{U}$. We say that $u\in C(\Omega)$ is an $L^p$-viscosity solution to $F=f$ if it is both an $L^p$-sub and an $L^p$-supersolution to $F=f$.
\end{Definition}

Although the definition of $L^p$-viscosity solutions requires $p>d/2$, the appropriate range for the integrability of the data is indeed $p>p_0>d/2$, as most results in the theory are available only in this setting. See, for instance, \cite{CCKS_1996}. For further reference, we recall a result on the twice-differentiability of $L^p$-viscosity solutions.

\begin{Lemma}[Twice-differentiability]\label{lem_puredequeijo}
Let $u\in C(\Omega)$ be an $L^p$-viscosity solution to \eqref{eq_main}. Suppose Assumptions A\ref{assump_F} and A\ref{assump_f} are in force. Then $u$ is twice differentiable almost everywhere in $\Omega$. Moreover, its pointwise derivatives satisfy the equation almost everywhere in $\Omega$.
\end{Lemma}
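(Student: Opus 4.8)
The plan is to reduce the statement to a known property of $L^p$-viscosity solutions of fully nonlinear equations with *bounded-measurable* coefficients depending only on $D^2u$. Specifically, I would argue that under Assumptions~A\ref{assump_F} and A\ref{assump_f}, the solution $u$ of \eqref{eq_main} is also an $L^p$-viscosity solution of a pair of extremal inequalities
\[
	\mathcal{P}^-_{\lambda,\Lambda}(D^2u)\leq g\qquad\text{and}\qquad \mathcal{P}^+_{\lambda,\Lambda}(D^2u)\geq -g\qquad\text{in }\Omega,
\]
where $g:=|f|+\gamma|Du|+\omega(|u|)\in L^p_{\rm loc}(\Omega)$. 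The membership $g\in L^p_{\rm loc}$ is where I would invoke \'Swi\k{e}ch's $W^{1,q}$-estimates (so $Du\in L^q_{\rm loc}$ for some $q>p$, in particular $|Du|\in L^p_{\rm loc}$), together with the remark closing Section~\ref{subsec_ma} on $\omega(|u|)$ being essentially bounded since $u\in C(\Omega)\cap L^\infty_{\rm loc}$. The extremal inequalities themselves follow directly from the structural condition A\ref{assump_F}: testing the definition of $L^p$-viscosity sub/supersolution with a $W^{2,p}_{\rm loc}$ test function $\phi$ and using that $F(D^2\phi,D\phi,u,x)$ is sandwiched between $\mathcal{P}^\mp_{\lambda,\Lambda}(D^2\phi)\mp(\gamma|D\phi|+\omega(|u|))+F(0,D\phi,u,x)$, and controlling $F(0,\cdot,\cdot,x)$ via A\ref{assump_F} and $F(0,0,0,x)=0$.

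The next step is to invoke the Caffarelli-type pointwise $W^{2,\varepsilon}$ or, more precisely, the a.e.\ twice-differentiability result for functions in the Pucci class $\overline{S}(\lambda,\Lambda,g)$ with $g\in L^p_{\rm loc}$, $p>p_0$ — this is exactly the Escauriaza range in which the ABP estimate (Assumption~A\ref{assump_f}) and the associated $W^{2,p}$-theory for $L^p$-viscosity solutions are available (\cite{CCKS_1996,Escauriaza,Swiech_1997}; see also \cite{Winter_2009}). From that one extracts that $u$ has a second-order Taylor expansion at almost every $x\in\Omega$; call the resulting a.e.-defined Hessian $D^2u(x)$ and gradient $Du(x)$. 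To close the Lemma I then need the second assertion: that these pointwise derivatives satisfy $F(D^2u(x),Du(x),u(x),x)=f(x)$ for a.e.\ $x$. Here I would use the standard fact that at a point of twice-differentiability the quadratic polynomial
\[
	P_x(y):=u(x)+Du(x)\cdot(y-x)+\tfrac12\langle D^2u(x)(y-x),y-x\rangle
\]
touches $u$ from above and from below up to $o(|y-x|^2)$; combining the $L^p$-viscosity sub- and supersolution inequalities against suitable $W^{2,p}$ perturbations of $P_x$ (adding $\pm\delta|y-x|^2$ and letting $\delta\to 0$) forces both $F(D^2u(x),Du(x),u(x),x)\leq f(x)$ and $\geq f(x)$ at Lebesgue points of $f$ that are also points of twice-differentiability — a full-measure set.

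The main obstacle I anticipate is the second half — passing from the viscosity inequalities to the a.e.\ pointwise equation — because the $L^p$-viscosity definition only tests against $W^{2,p}_{\rm loc}$ functions on open sets and involves the strict-inequality-with-$\varepsilon$ formulation, so one cannot naively "plug in" the a.e.-defined Taylor polynomial $P_x$ (which is merely quadratic, hence smooth, but defined via pointwise differentiability rather than as a genuine touching test function on an open neighborhood). The careful point is to upgrade, at a.e.\ $x$, the second-order pointwise expansion into an honest local touching by a $W^{2,p}$ function — this is where one leans on the measure-theoretic arguments behind the $W^{2,p}$-theory (the fact that $L^p$-viscosity solutions in the Escauriaza range belong to $W^{2,p}_{\rm loc}$, so $D^2u$ exists in the Sobolev sense and the equation holds a.e.\ as an $L^p$-identity). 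In fact, once $u\in W^{2,p}_{\rm loc}(\Omega)$ with $p>p_0$ is in hand, the cleanest route is: $W^{2,p}_{\rm loc}$ functions are twice differentiable a.e.\ (Calder\'on--Zygmund / Reshetnyak), and an $L^p$-viscosity solution that lies in $W^{2,p}_{\rm loc}$ solves the equation a.e.\ in the classical pointwise sense (\cite[Lemma-level results]{CCKS_1996}). I would therefore organize the write-up so that the $W^{2,p}_{\rm loc}$-regularity — itself a consequence of Assumptions A\ref{assump_F}, A\ref{assump_f}, A\ref{assump_osc} via the Caffarelli--Escauriaza--\'Swi\k{e}ch machinery — is the pivotal input, and the a.e.\ twice-differentiability plus a.e.\ satisfaction of the equation follow as essentially standard corollaries.
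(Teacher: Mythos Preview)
The paper does not actually prove this lemma: it simply cites \cite[Theorem~3.6]{CCKS_1996} and moves on. Your outline is in the right spirit --- reduce to a Pucci class, invoke a.e.\ twice-differentiability, then verify the equation pointwise --- but it contains two genuine gaps.

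First, you absorb $\gamma|Du|$ into the source term $g$ and then appeal to \'Swi\k{e}ch's $W^{1,q}$-estimates (Proposition~\ref{prop_andrzej}) to secure $|Du|\in L^p_{\rm loc}$. But Proposition~\ref{prop_andrzej} requires Assumption~A\ref{assump_osc}, which is \emph{not} among the hypotheses of the Lemma; only A\ref{assump_F} and A\ref{assump_f} are assumed. The fix is simply to keep the first-order term on the operator side: from A\ref{assump_F} and $F(0,0,0,x)=0$ one gets that $u$ is an $L^p$-viscosity solution of
\[
	\mathcal{P}^-_{\lambda,\Lambda}(D^2u)-\gamma|Du|\leq |f|+\omega(|u|)\qquad\text{and}\qquad \mathcal{P}^+_{\lambda,\Lambda}(D^2u)+\gamma|Du|\geq -|f|-\omega(|u|),
\]
and the twice-differentiability result in \cite{CCKS_1996} already covers Pucci classes with such gradient terms. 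Hence you only need $|f|+\omega(|u|)\in L^p_{\rm loc}$, which follows directly from A\ref{assump_f} and the remark at the end of Section~\ref{subsec_ma}.

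Second, and more seriously, your ``cleanest route'' via $u\in W^{2,p}_{\rm loc}$ is not available in this generality. The Caffarelli--Escauriaza $W^{2,p}$-theory requires $C^{1,1}$ (or $C^{2,\alpha}$) interior estimates for the frozen-coefficient problem $F(D^2v,0,0,x_0)=0$, which in turn demand convexity or concavity of $F$ in the Hessian variable --- an assumption nowhere made in this paper. Under A\ref{assump_F} alone one does not get $W^{2,p}_{\rm loc}$; one gets only the $W^{2,\varepsilon}$-type pointwise differentiability of \cite{CCKS_1996}. The a.e.\ satisfaction of the equation in \cite[Theorem~3.6]{CCKS_1996} is obtained precisely through the touching-polynomial argument you sketched first (perturbing $P_x$ by $\pm\delta|y-x|^2$ at Lebesgue points of $f$), not through Sobolev regularity. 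So your initial instinct was correct and your ``cleanest route'' is a false shortcut.
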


For the proof of Lemma \ref{lem_puredequeijo}, see \cite[Theorem 3.6]{CCKS_1996}. In what follows, we present a lemma relating $L^p$-viscosity solutions to $F=f$ with equations governed by the extremal Pucci operators.

\begin{Lemma}\label{lem_prelvc}
Suppose Assumption A\ref{assump_F} is in force and $f\in L^p(\Omega)$, with $p>p_0$. Suppose further that $u\in C(\Omega)$ is twice differentiable almost everywhere in $\Omega$. Then $u$ is an $L^p$-viscosity subsolution {\rm [resp. supersolution]} of \eqref{eq_main} if and only if
\begin{enumerate}
\item[i.] we have 
\[
	\begin{split}
		F(D^2u(x)&,Du(x),u(x),x)\leq f(x)\\
		{\rm [\mbox{\rm resp. }} F(D^2u(&x),Du(x),u(x),x)\geq f(x){\rm ]}
	\end{split}
\]
almost everywhere in $\Omega$, and
\item[ii.] whenever $\phi\in W^{2,p}_{\rm loc}(\Omega)$ and $u-\phi$ has a local maximum {\rm [resp. minimum]} at $x^*$, then
\[
	\begin{split}
		{\rm ess}&\liminf_{x\to x^*}\left(\mathcal{P}^-\left(D^2(u-\phi)(x)\right)-\gamma\left|D(u-\phi)(x)\right|\right)\geq 0\\
		{\rm [\mbox{\rm resp. }}\hspace{.1in} &{\rm ess}\limsup_{x\to x^*}\left(\mathcal{P}^+\left(D^2(u-\phi)(x)\right)+\gamma\left|D(u-\phi)(x)\right|\right)\leq 0{\rm ]}.
	\end{split}
\]
\end{enumerate}
\end{Lemma}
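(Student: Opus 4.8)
\medskip

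\noindent\emph{Proof plan.} The statement is an equivalence, and the plan is to establish the two implications separately; since the supersolution case is symmetric I discuss only subsolutions. These implications are, respectively, the ``$L^p$-strong $\Rightarrow$ $L^p$-viscosity'' and ``$L^p$-viscosity $\Rightarrow$ $L^p$-strong'' directions, modelled on the corresponding facts for operators $F=F(M,x)$ from \cite{CCKS_1996}. The only bookkeeping forced by the lower-order terms is this: every invocation of Assumption A\ref{assump_F} below compares $F$ at $(D^2u,Du,u,x)$ with $F$ at $(D^2\phi,D\phi,u,x)$, so the $r$-slot equals $u(x)$ on both sides and the modulus $\omega$ contributes $\omega(0)=0$, leaving only the gradient term $\gamma|Du-D\phi|$ already present in the extremal inequalities.

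\smallskip

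\noindent\emph{The ``if'' direction} is a short comparison argument. Assuming (i) and (ii), suppose $u$ is not an $L^p$-viscosity subsolution: there are $\phi\in W^{2,p}_{\rm loc}(\Omega)$, $\varepsilon>0$ and an open $\mathcal U\subset\Omega$ with $F(D^2\phi(x),D\phi(x),u(x),x)-f(x)\geq\varepsilon$ for a.e.\ $x\in\mathcal U$, while $u-\phi$ has a local maximum at some $x^*\in\mathcal U$. Subtracting (i) (valid a.e.\ in $\mathcal U$) gives $F(D^2\phi,D\phi,u,x)-F(D^2u,Du,u,x)\geq\varepsilon$ a.e.\ in $\mathcal U$, and the upper extremal inequality of A\ref{assump_F} (with coinciding $r$-slots, so $\omega$ drops) turns this into
\[
\mathcal P^-\big(D^2(u-\phi)(x)\big)-\gamma\,|D(u-\phi)(x)|\ \leq\ -\varepsilon \qquad\text{for a.e.\ }x\in\mathcal U .
\]
Taking the essential $\liminf$ as $x\to x^*$ contradicts (ii) applied to this $\phi$. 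Note that this half uses only A\ref{assump_F}, not the a.e.\ twice-differentiability of $u$.

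\smallskip

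\noindent\emph{The ``only if'' direction} is where the content is. Property (ii) is the more elementary half: testing the definition with $\phi$ on the balls $B_r(x^*)\Subset\Omega$ and using that $u-\phi$ has a local maximum at $x^*\in B_r(x^*)$, the requirement ``$F(D^2\phi,D\phi,u,\cdot)-f\geq\varepsilon$ a.e.\ on $B_r(x^*)$'' is impossible for every $\varepsilon>0$ and every small $r$; letting $r\downarrow0$ this is exactly ${\rm ess}\liminf_{x\to x^*}\big(F(D^2\phi(x),D\phi(x),u(x),x)-f(x)\big)\leq0$, and one then passes to the form in (ii) by combining it with (i), the extremal inequalities, and the a.e.\ twice-differentiability of $u$ and $\phi$ — again a piece of the $L^p$-strong/$L^p$-viscosity identification of \cite{CCKS_1996}. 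Property (i) — that a twice-differentiable-a.e.\ $L^p$-viscosity subsolution satisfies the pointwise inequality a.e., i.e.\ is an \emph{$L^p$-strong} subsolution — is the heart of the matter and deserves the bulk of the effort; it is contained in \cite[Theorem 3.6]{CCKS_1996} (cf.\ Lemma \ref{lem_puredequeijo}), the lower-order terms being absorbed as above. \textbf{The main obstacle} is exactly the merely measurable dependence of $F$ and $f$ on $x$: one cannot touch $u$ from above at a point $x_0$ of twice-differentiability by its second-order Taylor polynomial perturbed by a small convex quadratic — which makes $u$ minus it attain a strict local maximum at $x_0$ — and read off the pointwise inequality, because the definition of $L^p$-viscosity solution demands the differential inequality a.e.\ on a full neighbourhood of $x_0$, not merely at $x_0$, while the $x$-measurable function $x\mapsto F(D^2u(x_0)+\eta I,Du(x_0),u(x_0),x)-f(x)$ is controlled only at the point $x_0$. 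The standard device, which I would use, is to replace $u$ by its sup-convolution $u^\delta$: being semiconvex it is twice differentiable a.e., it is an $L^p$-viscosity subsolution of a mildly perturbed equation (a classical lemma of \cite{CCKS_1996}), for that perturbed equation the pointwise inequality can be propagated, and one then sends $\delta\to0$ via a stability argument of the type behind Lemma \ref{lem_puredequeijo}. In sum, the ``if'' direction and property (ii) are routine consequences of the definition and of A\ref{assump_F}, whereas (i) in the forward direction is the genuinely delicate step.
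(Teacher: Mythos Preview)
The paper does not supply its own proof of this lemma: immediately after the statement it writes ``For the proof of Lemma \ref{lem_prelvc}, we refer the reader to \cite[Lemma 1.5]{Swiech_1997}.'' So there is no in-paper argument to compare your proposal against; your write-up is an attempt at what the paper deliberately outsources.

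On the substance of your outline: the ``if'' direction is correct and cleanly argued. For the ``only if'' direction, your handling of item (i) is also fine, though the detour through sup-convolutions is heavier than necessary --- since $u$ is already assumed twice differentiable a.e., the touching-by-paraboloids argument behind \cite[Theorem 3.6]{CCKS_1996} (your Lemma \ref{lem_puredequeijo}) applies directly. The genuine weak spot is your derivation of item (ii). You correctly obtain
\[
	{\rm ess}\liminf_{x\to x^*}\big(F(D^2\phi,D\phi,u,x)-f(x)\big)\leq 0,
\]
but then claim to pass to the extremal form of (ii) ``by combining it with (i) and the extremal inequalities''. That step does not go through as written: from (i) and A\ref{assump_F} one gets, almost everywhere,
\[
	\mathcal P^-\big(D^2(u-\phi)\big)-\gamma|D(u-\phi)|\ \leq\ f-F(D^2\phi,D\phi,u,x),
\]
which yields an \emph{upper} bound on the left-hand side, whereas (ii) asks for the \emph{lower} bound ${\rm ess}\liminf\geq 0$. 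Your ${\rm ess}\liminf\leq 0$ information on $F(D^2\phi,\ldots)-f$ is on the wrong side to close the argument. A correct route to (ii) (as in \cite{Swiech_1997,CCKS_1996}) proceeds instead by contradiction directly at the level of the extremal inequality --- assuming it fails on a neighbourhood and producing a test function that violates the $L^p$-viscosity subsolution property --- rather than by post-processing the $\phi$-inequality through (i).
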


For the proof of Lemma \ref{lem_prelvc}, we refer the reader to \cite[Lemma 1.5]{Swiech_1997}. We are interested in a consequence of Lemma \ref{lem_prelvc} that allows us to relate the solutions of $F(D^2u,Du,u,x)=f$ with the equation $F(D^2u,0,0,x)=\tilde{f}$, for some $\tilde{f}\in L^p(\Omega)$. This is the content of the next corollary.

\begin{Corollary}\label{cor_salvador}
Let $u\in C(\Omega)$ be an $L^p$-viscosity solution to \eqref{eq_main}. Suppose A\ref{assump_F} and A\ref{assump_f} hold. Define $\tilde{f}:\Omega\to\mathbb{R}$ as 
\[
	\tilde{f}(x):=F(D^2u(x),0,0,x).
\]
If $\tilde{f}\in L^p(\Omega)$, then $u$ is an $L^p$-viscosity solution of 
\begin{equation}\label{eq_pendrive}
	F(D^2u,0,0,x)=\tilde{f}\hspace{.2in}\mbox{in}\hspace{.2in}\Omega.
\end{equation}
\end{Corollary}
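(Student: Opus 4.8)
The plan is to verify directly that $u$ satisfies the two conditions in Lemma~\ref{lem_prelvc} for the operator $G(M,p,r,x):=F(M,0,0,x)$ with right-hand side $\tilde f$. First I would record that $G$ inherits from Assumption~A\ref{assump_F} the inequalities $\mathcal{P}^-_{\lambda,\Lambda}(M-N)\le G(M,p,r,x)-G(N,q,s,x)\le\mathcal{P}^+_{\lambda,\Lambda}(M-N)$ with $\gamma$ replaced by $0$ and $\omega\equiv 0$ (since $G$ does not see the $p,r$ slots), so $G$ meets the structural hypothesis of Lemma~\ref{lem_prelvc}. By Lemma~\ref{lem_puredequeijo}, $u$ is twice differentiable a.e.\ in $\Omega$, so the hypothesis ``$u\in C(\Omega)$ twice differentiable a.e.'' of Lemma~\ref{lem_prelvc} holds. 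The integrability hypothesis on the right-hand side is exactly the assumption $\tilde f\in L^p(\Omega)$.

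Condition~i.\ is immediate: by definition $\tilde f(x)=F(D^2u(x),0,0,x)=G(D^2u(x),Du(x),u(x),x)$ for a.e.\ $x$, so the pointwise equation $G(D^2u,Du,u,x)=\tilde f$ holds a.e.\ — in particular both the subsolution inequality $\le$ and the supersolution inequality $\ge$ hold, with equality. Condition~ii.\ is the part that requires a genuine (if short) argument. Fix $\phi\in W^{2,p}_{\rm loc}(\Omega)$ and suppose $u-\phi$ has a local maximum at $x^*$. Since $u$ is an $L^p$-viscosity solution of \eqref{eq_main}, it is in particular an $L^p$-viscosity subsolution, so Lemma~\ref{lem_prelvc} applied to the original operator $F$ gives
\[
	{\rm ess}\liminf_{x\to x^*}\left(\mathcal{P}^-_{\lambda,\Lambda}\left(D^2(u-\phi)(x)\right)-\gamma\left|D(u-\phi)(x)\right|\right)\ge 0.
\]
But for the operator $G$ the corresponding quantity in Lemma~\ref{lem_prelvc} is ${\rm ess}\liminf_{x\to x^*}\mathcal{P}^-_{\lambda,\Lambda}(D^2(u-\phi)(x))$, with no gradient term (because $G$ has effective first-order constant $0$). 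Since $\gamma|D(u-\phi)|\ge 0$, we have $\mathcal{P}^-_{\lambda,\Lambda}(D^2(u-\phi)(x))\ge \mathcal{P}^-_{\lambda,\Lambda}(D^2(u-\phi)(x))-\gamma|D(u-\phi)(x)|$ pointwise a.e., and taking ${\rm ess}\liminf$ preserves the inequality, so the required bound for $G$ follows a fortiori. The supersolution case at a local minimum is symmetric, using ${\rm ess}\limsup$ and $\mathcal{P}^+_{\lambda,\Lambda}$.

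Having checked conditions~i.\ and~ii.\ for $G$ and $\tilde f$, the reverse implication of Lemma~\ref{lem_prelvc} yields that $u$ is both an $L^p$-viscosity subsolution and supersolution of $G(D^2u,Du,u,x)=\tilde f$, i.e.\ of \eqref{eq_pendrive}, which is the claim. I do not expect a serious obstacle here: the only subtlety is making sure the effective $\gamma$ and $\omega$ for $G$ are genuinely zero and that dropping a nonnegative term from inside an ${\rm ess}\liminf$ (resp.\ adding one inside an ${\rm ess}\limsup$) goes in the favorable direction — which it does. One should also note in passing that $G$ is trivially non-decreasing in $r$ (it is constant in $r$) and $G(0,0,0,x)=F(0,0,0,x)=0$, so $\tilde f$ and $G$ are admissible data for Definition~\ref{def_lpvisc} and for Lemma~\ref{lem_prelvc}.
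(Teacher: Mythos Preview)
Your proof is correct and follows essentially the same route as the paper's: use Lemma~\ref{lem_puredequeijo} for twice differentiability, then verify conditions~i.\ and~ii.\ of Lemma~\ref{lem_prelvc} for the operator $G(M,x)=F(M,0,0,x)$, obtaining item~ii.\ from the forward direction of Lemma~\ref{lem_prelvc} applied to the original equation. You are slightly more explicit than the paper in noting that $G$ has effective first-order constant $0$ and that dropping the nonnegative term $\gamma|D(u-\phi)|$ only helps; the paper simply records the inequality with $\gamma$ and declares item~ii.\ verified, which is equally valid since $G$ also satisfies A\ref{assump_F} with the original $\gamma$.
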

\begin{proof}
We only prove that $u$ is an $L^p$-viscosity subsolution to \eqref{eq_pendrive}, as the case of supersolutions is analogous. Notice the proof amounts to verify the conditions in items $i.$ and $ii.$ of Lemma \ref{lem_prelvc}. 

Because $u$ solves \eqref{eq_main} in the $L^p$-viscosity sense, Lemma \ref{lem_puredequeijo} implies it is twice differentiable almost everywhere in $\Omega$. Hence, the definition of $\tilde{f}$ ensures
\[
	F(D^2u(x),0,0,x)\leq \tilde{f}(x)
\]
almost everywhere in $\Omega$, which verifies item $i.$ in Lemma \ref{lem_prelvc}. 

To address item $ii.$, we resort to Lemma \ref{lem_prelvc} in the opposite direction. Let $\phi\in W^{2,p}_{\rm loc}(\Omega)$ and suppose $x^*\in \Omega$ is a point of maximum for $u-\phi$. Since $u$ is an $L^p$-viscosity solution to \eqref{eq_main}, that lemma ensures that 
\[
	{\rm ess}\liminf_{x\to x^*}\left(\mathcal{P}^-\left(D^2(u-\phi)(x)\right)-\gamma\left|D(u-\phi)(x)\right|\right)\geq 0.
\]
Therefore, item $ii.$ also follows and the proof is complete.
\end{proof}

We also use the truncated Riesz potential of $f$. In fact, we consider its $L^p$-variant, introduced in \cite{DKM_2014}. To be precise, given $f\in L^p(\Omega)$, we define its (truncated) Riesz potential ${\bf I}^f_p(x,r)$ as
\[
	{\bf I}^f_p(x,r):=\int_0^r\left(\intav{B_\rho(x)}|f(y)|^p{\rm d}y\right)^\frac{1}{p}{\rm d}\rho.
\]
In case $p=1$ we recover the usual truncated Riesz potential. 

We proceed by stating Theorems 1.2 and 1.3 in \cite{DKM_2014}. 

\begin{Proposition}[Daskalopoulos-Kuusi-Mingione I]\label{thm_dkm12}
Let $u\in C(\Omega)$ be an $L^p$-viscosity solution to 
\[
	F(D^2u,x)=f\hspace{.2in}\mbox{in}\hspace{.2in}B_1.
\]
Suppose Assumptions A\ref{assump_F} and A\ref{assump_f} are in force. Then there exists $\theta_1$ such that, if Assumption A\ref{assump_osc} holds for $\theta\equiv\theta_1$, one has
\[
	\left|Du(x)\right|\leq C\left[{\bf I}^f_p(x,r)+\left(\intav{B_r(x)}\left|Du(y)\right|^q{\rm d}y\right)^\frac{1}{q}\right]
\]
for every $x\in \Omega$ and $r>0$ with $B_r(x)\subset \Omega$, for some universal constant $C>0$.\end{Proposition}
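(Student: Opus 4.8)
This is \cite[Theorem 1.2]{DKM_2014}; for completeness I outline the argument, which also fixes notation for the proof of Theorem \ref{thm_main1}. Fix $x_0\in\Omega$ with $B_r(x_0)\subset\Omega$, a dyadic ratio $\tau\in(0,1)$ to be chosen, and write $r_k:=\tau^kr$,
\[
	E_k:=\left(\intav{B_{r_k}(x_0)}\bigl|Du-(Du)_{r_k,x_0}\bigr|^q{\rm d}y\right)^{\frac1q},\qquad A_k:=r_k\left(\intav{B_{r_k}(x_0)}|f(y)|^p{\rm d}y\right)^{\frac1p}.
\]
The plan is to obtain a one-step decay for $E_k$ (up to the source term $A_k$), iterate it, and telescope the averages $(Du)_{r_k,x_0}$. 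Two linear facts serve as input. First, \'Swi\k{e}ch's $W^{1,q}$-estimate \cite{Swiech_1997} gives $u\in W^{1,q}_{\rm loc}(\Omega)$ for every $q<p^{*}$, and since $p>p_0>d/2$ one may choose $q\in(d,p^{*})$, so every $E_k$ is finite and we have the a priori integrability that will substitute for energy estimates. Second, interior $C^{1,\alpha_0}$-regularity for the frozen homogeneous problem: $F(\cdot,x_0)$ is $(\lambda,\Lambda)$-elliptic by Assumption A\ref{assump_F}, so any $L^p$-viscosity solution $h$ of $F(D^2h,x_0)=0$ in $B_1$ satisfies $\|Dh\|_{C^{0,\alpha_0}(B_{1/2})}\leq C_0\|Dh\|_{L^q(B_1)}$, with $\alpha_0,C_0$ depending only on $d,\lambda,\Lambda$.

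The core step — and the one I expect to be the main obstacle — is a one-scale excess decay. Rescaling through $u_\rho(y):=\rho^{-1}\bigl(u(x_0+\rho y)-(Du)_{\rho,x_0}\cdot\rho y\bigr)$ turns \eqref{eq_main} (here $F=F(M,x)$) into $\tilde F_\rho(D^2u_\rho,y)=f_\rho(y)$ in $B_1$, where $\tilde F_\rho(M,y):=\rho F(\rho^{-1}M,x_0+\rho y)$ is again $(\lambda,\Lambda)$-elliptic, satisfies $\intav{B_1}\beta_{\tilde F_\rho}(y,0)^p{\rm d}y=\intav{B_\rho(x_0)}\beta(x,x_0)^p{\rm d}x\leq\theta^p$ by Assumption A\ref{assump_osc}, and $\|f_\rho\|_{L^p(B_1)}$ is comparable to $\rho\,(\intav{B_\rho(x_0)}|f|^p)^{1/p}$. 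After normalizing $\|Du_\rho\|_{L^q(B_1)}=1$, I would compare $u_\rho$ with the solution $h$ of $F(D^2h,0)=0$ having the same trace on $\partial B_1$: the ABP estimate, the stability of $L^p$-viscosity solutions under uniform convergence and under $L^p$-perturbations of the right-hand side, and a Caffarelli-type compactness/contradiction argument make $\|u_\rho-h\|_{L^\infty(B_{1/2})}$ as small as we please once $\theta$ and $\|f_\rho\|_{L^p}$ are small. Combining this with the $C^{1,\alpha_0}$-bound for $h$ — and choosing $\tau$ so that $C_0\tau^{\alpha_0}\leq\tfrac14$, then $\theta_1=\theta_1(d,\lambda,\Lambda,p,q)$ small — yields, whenever Assumption A\ref{assump_osc} holds with $\theta\equiv\theta_1$,
\[
	E_{k+1}\leq\tfrac12\tau^{\alpha_0}E_k+CA_k\qquad\text{for every }k\geq0,
\]
valid as long as the $E_k$ remain bounded, which is exactly why one runs the iteration on the rescaled problems rather than on the fixed equation. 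The genuine difficulty sits in this compactness step: Caccioppoli/energy bounds are unavailable for viscosity solutions, and \'Swi\k{e}ch's estimate is the replacement that makes the argument close.

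It remains to iterate and sum. From $\bigl|(Du)_{r_{k+1},x_0}-(Du)_{r_k,x_0}\bigr|\leq CE_k$ and the recursion one gets $\sum_{k\geq0}E_k\leq C\bigl(E_0+\sum_{k\geq0}A_k\bigr)$, while a Riemann-sum comparison with the integral defining ${\bf I}^f_p$ — using $\intav{B_\rho(x_0)}|f|^p\geq\tau^d\intav{B_{r_{k+1}}(x_0)}|f|^p$ for $\rho\in[r_{k+1},r_k]$ — gives $\sum_{k\geq0}A_k\leq C\,{\bf I}^f_p(x_0,r)$. Hence $\{(Du)_{r_k,x_0}\}_k$ is Cauchy; it converges at every Lebesgue point of $Du$ to $Du(x_0)$, and summing the telescoping series together with the trivial bound $|(Du)_{r,x_0}|+E_0\leq C(\intav{B_r(x_0)}|Du|^q)^{1/q}$ produces the asserted estimate with $C=C(d,\lambda,\Lambda,p,q)$; the threshold in the statement is the $\theta_1$ fixed above.
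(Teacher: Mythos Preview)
The paper does not give its own proof of this proposition: it is stated as a quotation of \cite[Theorem 1.2]{DKM_2014} (the paper's citation to Theorem 1.3 appears to be a slip), and the authors simply refer the reader to \cite{DKM_2014} for the argument. Your outline is a faithful sketch of the Daskalopoulos--Kuusi--Mingione proof --- excess decay via comparison with the frozen homogeneous problem, with \'Swi\k{e}ch's $W^{1,q}$-estimate standing in for the missing energy bounds, followed by dyadic iteration and telescoping against the $p$-Riesz potential --- so there is no discrepancy to report.
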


\begin{Proposition}[Daskalopoulos-Kuusi-Mingione II]\label{thm_dkm13}
Let $u\in C(\Omega)$ be an $L^p$-viscosity solution to 
\[
	F(D^2u,x)=f\hspace{.2in}\mbox{in}\hspace{.2in}\Omega.
\]
Suppose Assumptions A\ref{assump_F} and A\ref{assump_f} are in force. Suppose further that ${\bf I}^f_p(x,r)\to 0$ as $r\to0$, uniformly in $x$. Then there exists $\theta_2$ such that, if Assumption A\ref{assump_osc} holds for $\theta\equiv\theta_2$, $Du$ is continuous. In addition, for $\Omega'\Subset\Omega''\Subset \Omega$, and any $\delta\in (0,1]$, one has
\[
	|Du(x)-Du(y)|\leq C\left(\|Du\|_{L^\infty(\Omega'')}|x-y|^{\alpha(1-\delta)}+\sup_{z\in\left\lbrace x,y\right\rbrace}\;{\bf I}^f_p\left(z,4|x-y|^\delta\right)\right),
\]
for every $x,y\in\Omega'$, where $C=C(d,p,\lambda,\Lambda, \gamma,\omega,\Omega',\Omega'')$ and $\alpha=\alpha(d,p,\lambda,\Lambda)$.
\end{Proposition}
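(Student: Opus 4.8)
This is Theorem~1.3 of \cite{DKM_2014}, so the plan is to reproduce its excess-decay-and-iteration scheme. Fix $q>d$. By Proposition~\ref{thm_dkm12} together with the hypothesis ${\bf I}^f_p(x,r)\to0$ uniformly, $Du$ is locally bounded, so all averages below are finite; abbreviate $(Du)_{x_0,\rho}:=\intav{B_\rho(x_0)}Du$ and set
\[
	E(x_0,\rho):=\left(\intav{B_\rho(x_0)}\big|Du(y)-(Du)_{x_0,\rho}\big|^{q}{\rm d}y\right)^{1/q}.
\]
The aim is a summable, potential-controlled decay of $E(x_0,\cdot)$ along geometric sequences of radii, from which both the continuity of $Du$ and the explicit modulus drop out by telescoping.

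The core step is a one-step decay estimate. Fix $x_0$, a ball $B_r(x_0)\Subset\Omega$, and let $h$ solve the frozen homogeneous problem $F(D^2h,x_0)=0$ in $B_{r/2}(x_0)$ with $h=u$ on $\partial B_{r/2}(x_0)$. Interior $C^{1,\alpha_0}$-estimates for constant-coefficient uniformly elliptic equations give, for $\sigma\in(0,1/2)$,
\[
	\left(\intav{B_{\sigma r}(x_0)}\big|Dh-(Dh)_{x_0,\sigma r}\big|^{q}\right)^{1/q}\le C\,\sigma^{\alpha_0}\left(\intav{B_{r/2}(x_0)}\big|Dh-(Dh)_{x_0,r/2}\big|^{q}\right)^{1/q}.
\]
By ellipticity (Lemma~\ref{lem_prelvc}) the remainder $w:=u-h$ is an $L^p$-viscosity solution of a two-sided Pucci problem whose right-hand side is bounded in $L^p(B_{r/2}(x_0))$ by $\|f\|_{L^p(B_r(x_0))}$ plus $\|\beta(\cdot,x_0)\,D^2h\|_{L^p(B_{r/2}(x_0))}$, the latter at most $C\theta$ times a bound on $D^2h$ supplied by Caffarelli's $W^{2,p}$-theory for the frozen operator; \'Swi\k{e}ch's interior $W^{1,q}$-estimate then controls $\big(\intav{B_{r/2}(x_0)}|Dw|^{q}\big)^{1/q}$ by $C\big(\intav{B_r(x_0)}|f|^{p}\big)^{1/p}r+C\theta\,(\text{local averages of }|Du|)$. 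Adding the two contributions, choosing first $\sigma$ small so that $C\sigma^{\alpha_0}\le\tfrac14\sigma^{\alpha}$ for a suitable $\alpha\in(0,\alpha_0)$, and only then $\theta_2$ small enough to absorb the $\theta$-error into the decay (this is where Assumption~A\ref{assump_osc} enters), produces
\[
	E(x_0,\sigma r)\le \tfrac12\,\sigma^{\alpha}E(x_0,r)+C\left(\intav{B_r(x_0)}|f(y)|^{p}{\rm d}y\right)^{1/p}r
\]
for every admissible $B_r(x_0)$.

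Iterating this over $\rho_k:=\sigma^{k}r$, and recognizing $\sum_{k\ge0}\big(\intav{B_{\rho_k}(x_0)}|f|^{p}\big)^{1/p}\rho_k$ as comparable, up to a constant, to the integral defining ${\bf I}^f_p(x_0,r)$, one obtains $\sum_{k\ge0}E(x_0,\rho_k)\le C\,E(x_0,r)+C\,{\bf I}^f_p(x_0,r)$. Since $\big|(Du)_{x_0,\rho_{k+1}}-(Du)_{x_0,\rho_k}\big|\le C\,E(x_0,\rho_k)$, the averages $(Du)_{x_0,\rho_k}$ form a Cauchy sequence whose limit, by the assumed uniform smallness of ${\bf I}^f_p(\cdot,r)$, is attained with a modulus independent of $x_0$; hence $Du$ has a representative that is defined everywhere and continuous. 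For the quantitative bound, fix $x,y\in\Omega'$, put $\varrho:=|x-y|$, start the iteration at $r:=4\varrho^{\delta}$ (admissible for $|x-y|$ small, since then $B_r(x)\Subset\Omega''$), and split
\[
	|Du(x)-Du(y)|\le\big|Du(x)-(Du)_{x,\varrho}\big|+\big|(Du)_{x,\varrho}-(Du)_{y,\varrho}\big|+\big|(Du)_{y,\varrho}-Du(y)\big|.
\]
The middle term is $\le C\,E(x,2\varrho)$ because $B_\varrho(y)\subset B_{2\varrho}(x)$, and the outer terms are dominated by telescoping the cascade from $r$ down to $\varrho$: its homogeneous part contributes $C(\varrho/r)^{\alpha}E(x,r)\le C\,|x-y|^{\alpha(1-\delta)}\|Du\|_{L^\infty(\Omega'')}$, and its source part contributes at most $C\,{\bf I}^f_p(x,4|x-y|^{\delta})$. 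Bounding this last term by $\sup_{z\in\{x,y\}}{\bf I}^f_p(z,4|x-y|^{\delta})$ yields the claimed inequality, with the stated dependence of $C$ and $\alpha$.

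The main obstacle is the one-step decay estimate. In the non-variational setting there is no energy identity to launch the iteration, so \'Swi\k{e}ch's $W^{1,q}$ a priori estimates --- available precisely in the Escauriaza range $p_0<p<d$ --- must substitute for the missing Caccioppoli inequality, and one has to track carefully how each quantity transforms under the rescaling $u\mapsto r^{-1}u(x_0+r\cdot)$, so that it is the $L^p$-averaged potential ${\bf I}^f_p$, rather than the classical $L^1$ one, that accumulates. A second delicate point is the order of quantifiers: $\sigma$ is fixed first, depending only on $d,\lambda,\Lambda,p,q$, and only afterward is $\theta_2$ chosen small enough, in terms of $\sigma$, that the coefficient-freezing error is reabsorbed without destroying the decay factor $\sigma^{\alpha}$.
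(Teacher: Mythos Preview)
The paper does not prove this proposition at all: it is recorded as a preliminary fact, and immediately after stating Propositions~\ref{thm_dkm12} and~\ref{thm_dkm13} the paper writes ``For the proofs of Propositions~\ref{thm_dkm12} and~\ref{thm_dkm13}, we refer the reader to \cite[Theorem~1.3]{DKM_2014}.'' Your opening sentence (``This is Theorem~1.3 of \cite{DKM_2014}'') is therefore already the entirety of the paper's argument, and on that score you match the paper exactly.

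Everything after your first sentence goes beyond the paper and sketches the actual excess-decay-and-iteration scheme from \cite{DKM_2014}. The broad structure you describe---frozen-coefficient comparison, one-step decay of the $L^q$-excess $E(x_0,\rho)$, geometric iteration along $\rho_k=\sigma^k r$, telescoping of the averages $(Du)_{x_0,\rho_k}$, and identification of the accumulated source term with ${\bf I}^f_p$---is the right architecture, and your remarks about the order of quantifiers ($\sigma$ first, then $\theta_2$) and about \'Swi\k{e}ch's $W^{1,q}$-estimates replacing the missing energy inequality are on point.

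One technical caution in your sketch: you control $\|\beta(\cdot,x_0)\,D^2h\|_{L^p(B_{r/2}(x_0))}$ by invoking ``Caffarelli's $W^{2,p}$-theory for the frozen operator.'' But $F(\cdot,x_0)$ is only assumed $(\lambda,\Lambda)$-elliptic, not convex, so $h$ is a priori only $C^{1,\alpha_0}$ and $D^2h$ need not lie in any $L^p$. The treatment of the coefficient-freezing error in \cite{DKM_2014} does not pass through a pointwise Hessian bound on the comparison function; the smallness of $\theta$ instead enters directly as a hypothesis in the \'Swi\k{e}ch-type $W^{1,q}$ comparison estimate. This does not affect your agreement with the present paper (which, again, only cites the result), but it is a gap if you intend the sketch to stand on its own.
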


For the proofs of Propositions \ref{thm_dkm12} and \ref{thm_dkm13}, we refer the reader to \cite[Theorem 1.3]{DKM_2014}. We close this section by including \'Swi\k{e}ch's $W^{1,p}$-regularity result.

\begin{Proposition}[$W^{1,q}$-regularity estimates]\label{prop_andrzej}
Let $u\in C(\Omega)$ be an $L^p$-viscosity solution to \eqref{eq_main}. Suppose Assumptions A\ref{assump_F} and A\ref{assump_f} are in force. There exists $0<\overline{\theta}\ll1$ such that, if Assumption A\ref{assump_osc} holds with $\theta\equiv\overline{\theta}$, then $u\in W^{1,q}_{\rm loc}(\Omega)$ for every $1<q<p^*$, where
\[
	p^*:=\frac{pd}{d-p},\hspace{.3in}\mbox{and}\hspace{.3in}d^*=+\infty.
\]
Also, for  $\Omega'\Subset \Omega$, there exists $C=C(d,\lambda,\Lambda,\gamma,\omega,q,{\rm diam}(\Omega'),{\rm dist}(\Omega',\partial \Omega))$ such that 
\[
	\left\|u\right\|_{W^{1,q}(\Omega')}\leq C\left(\left\|u\right\|_{L^\infty(\partial\Omega)}+\left\|f\right\|_{L^p(\Omega)}\right).
\]
\end{Proposition}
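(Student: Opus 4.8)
\medskip

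\noindent\emph{Strategy.} The plan is to run Caffarelli's perturbative scheme, in the form adapted to $L^p$-viscosity solutions and to the sub-critical range $p_0<p<d$ by \'Swi\k{e}ch \cite{Swiech_1997}; we only sketch the argument. First I would record the a priori sup-bound: since $p>p_0$, the ABP maximum principle (Assumption A\ref{assump_f}, together with the improved integrability of the Green function of \cite{FS1984}) applies to the extremal inequalities satisfied by $u$ through Lemma \ref{lem_prelvc}, which yields $\|u\|_{L^\infty(\Omega)}\le \|u\|_{L^\infty(\partial\Omega)}+C\|f\|_{L^p(\Omega)}$; it then suffices to bound $\|u\|_{W^{1,q}(\Omega')}$ by $\|u\|_{L^\infty(\Omega)}+\|f\|_{L^p(\Omega)}$. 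By a covering argument this reduces to an interior estimate in $B_{1/2}$ under the normalization $\|u\|_{L^\infty(B_1)}\le1$ and $\|f\|_{L^p(B_1)}$ as small as we wish; by Lemma \ref{lem_prelvc} and the remark following Assumption A\ref{assump_osc}, $u$ then belongs to the Pucci class with an effective right-hand side bounded by $|f|+\gamma|Du|+\omega(|u|)$, the term $\omega(|u|)$ lying in $L^\infty_{\rm loc}$ by the Krylov--Safonov H\"older bound, and $\gamma|Du|$ contributing to the effective datum only at a lower order in the radius, hence reabsorbed at small scales.

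\medskip

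\noindent\emph{Approximation.} The core step is a compactness lemma: for every $\varepsilon>0$ there are $\theta,\eta>0$ such that, whenever Assumption A\ref{assump_osc} holds with that $\theta$ and $\|f\|_{L^p(B_1)}\le\eta$, one finds $h$ solving a constant-coefficient homogeneous equation $\overline F(D^2h)=0$ in $B_{1/2}$ --- which enjoys interior $C^{1,\bar\alpha}$ estimates depending only on $d,\lambda,\Lambda$ \cite{Trudinger,CC} --- with $\|u-h\|_{L^\infty(B_{1/2})}\le\varepsilon$. I would establish this by contradiction, extracting locally uniformly convergent subsequences by means of the Krylov--Safonov H\"older estimate, and using the stability of $L^p$-viscosity solutions under uniform convergence of the solutions and $L^p$-convergence of the data \cite{CCKS_1996} to identify the limit as a solution of a fixed frozen homogeneous equation. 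This is exactly where $p>p_0$ is used: the compactness relies on the ABP estimate being available in $L^p$ for $p>p_0$.

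\medskip

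\noindent\emph{Iteration and conclusion.} Combining the $C^{1,\bar\alpha}$ bound on $h$ with the closeness $\|u-h\|_{L^\infty(B_{1/2})}\le\varepsilon$ produces, after normalization, a one-step density decay for a maximal function of $|Du|$ built from difference quotients of $u$: there is a universal $\rho\in(0,1)$ so that the portion of $B_\rho$ on which this maximal function exceeds a large level $t$ is a definite fraction of the corresponding portion of $B_1$, up to a term controlled by $\|f\|_{L^p}$. Iterating over dyadic subballs and applying a Calder\'on--Zygmund covering lemma, in the sub-critical refinement of \cite{Caffarelli_1989} due to \cite{Escauriaza}, yields a power-type decay of the distribution function of the maximal function of $|Du|^{p_0}$, with an exponent tending to $+\infty$ as $p\uparrow d$; this is precisely the assertion that $Du\in L^q_{\rm loc}$ for every $q<p^*=pd/(d-p)$, with $\|Du\|_{L^q(B_{1/4})}\le C\big(\|u\|_{L^\infty(B_1)}+\|f\|_{L^p(B_1)}\big)$. (Alternatively, carried out at the level of $D^2u$ the scheme reproduces the $W^{2,p}$-estimate of \cite{Escauriaza}, from which the gradient integrability follows by the Sobolev embedding.) A covering of $\Omega'$ by such balls, together with the rescaled estimate, gives the asserted bound on $\Omega'$, and the $L^q$-control of the difference quotients upgrades the a.e.\ differentiability of $u$ from Lemma \ref{lem_puredequeijo} to $u\in W^{1,q}_{\rm loc}(\Omega)$ in the Sobolev sense.

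\medskip

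\noindent\emph{Main obstacle.} The hard part is the sub-critical integrability $p_0<p<d$: one cannot invoke $W^{2,p}$ estimates to bound the gradient, so Caffarelli's scheme must be carried out directly at the level of $Du$, through difference quotients, and the only admissible substitute for the Calder\'on--Zygmund inequality is the ABP estimate in the Fabes--Stroock range $p>p_0$. Making the approximation lemma quantitative in $\theta$, extracting the sharp Sobolev exponent $p^*$ in the iteration, and reabsorbing the genuine, undifferentiated first-order term $\gamma|Du|$ at small scales are the delicate technical points.
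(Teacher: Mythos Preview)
The paper does not prove this proposition at all: it is recorded in the preliminaries as \'Swi\k{e}ch's $W^{1,q}$-regularity theorem (note the sentence introducing it, ``We close this section by including \'Swi\k{e}ch's $W^{1,p}$-regularity result''), and is used as a black box in Proposition~\ref{prop_wholething}. So there is no in-paper argument to compare against; the paper's ``proof'' is simply a citation of \cite{Swiech_1997}.

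Your proposal goes well beyond that and sketches the actual mechanism behind \'Swi\k{e}ch's result: the ABP sup-bound in the range $p>p_0$, a compactness/approximation lemma producing a nearby $C^{1,\bar\alpha}$ solution of a frozen homogeneous equation, and a Calder\'on--Zygmund iteration at the gradient level in the sub-critical regime, with reabsorption of the lower-order term $\gamma|Du|$ at small scales. As a high-level outline of \cite{Swiech_1997} this is accurate and identifies the genuine difficulties (no $W^{2,p}$ route when $p<d$; sharp exponent $p^*$ in the iteration). For the purposes of the present paper, however, a one-line reference suffices.
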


The former result plays an important role in our argument since it allows us to relate the operator $F(M,p,r,x)$ with $F(M,0,0,x)$. In what follows, we detail the proofs of Theorems \ref{thm_main1} and \ref{thm_main2}.

\section{Proof of Theorems \ref{thm_main1} and \ref{thm_main2}}\label{sec_proof}

In the sequel, we detail the proofs of Theorems \ref{thm_main1} and \ref{thm_main2}. Resorting to a covering argument, we work in the unit ball $B_1$ instead of $\Omega$. As we described before, the strategy is to show that $L^p$-viscosity solutions to \eqref{eq_main} are also $L^p$-viscosity solutions to 
\[
	G(D^2u,x)=g\hspace{.2in}\mbox{in}\hspace{.2in}B_1.
\]
Then verify that $G:S(d)\times B_1\setminus\mathcal{N}\to\mathbb{R}$ and $g\in L^p(B_1)$ are in the scope of \cite{DKM_2014}. More precisely, satisfying the conditions in Theorems 1.2 and 1.3 in that paper. We continue with a proposition.

\begin{Proposition}\label{prop_wholething}
Let $u\in C(B_1)$ be an $L^p$-viscosity solution to \eqref{eq_main}. Suppose Assumptions A\ref{assump_F} and A\ref{assump_f} are in force. Suppose further that Assumption A\ref{assump_osc} holds with $\theta\equiv\overline\theta$, where $\overline\theta$ is the parameter from Proposition \ref{prop_andrzej}. Then $u$ is an $L^p$-viscosity solution for 
\[
	F(D^2u,0,0,x)=\tilde{f}\hspace{.2in}\mbox{in}\hspace{.2in}B_{9/10},
\]
where $\tilde{f}\in L^p_{\rm loc}(B_1)$ and there exists $C>0$ such that 
\[
	\big\|\tilde{f}\big\|_{L^p(B_{9/10})}\leq C\left(\left\|u\right\|_{L^\infty(B_1)}+\left\|f\right\|_{L^p(B_1)}\right).
\]
\end{Proposition}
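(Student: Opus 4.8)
The plan is to combine Corollary~\ref{cor_salvador} with Proposition~\ref{prop_andrzej} in the obvious way. The only thing Corollary~\ref{cor_salvador} leaves open is the integrability of $\tilde f(x):=F(D^2u(x),0,0,x)$; once we know $\tilde f\in L^p_{\rm loc}(B_1)$ with the stated bound, the corollary immediately gives that $u$ is an $L^p$-viscosity solution of $F(D^2u,0,0,x)=\tilde f$ on $B_{9/10}$, and we are done. So the entire content of the proof is an $L^p$ estimate for $\tilde f$ on $B_{9/10}$.

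First I would invoke Proposition~\ref{prop_andrzej}: since Assumption~A\ref{assump_osc} holds with $\theta\equiv\overline\theta$, we have $u\in W^{1,q}_{\rm loc}(B_1)$ for every $1<q<p^*$, and in particular (choosing any fixed such $q$, e.g.\ $q>d$ if $p>d$, or $q$ close to $p^*$ otherwise) an estimate of the form $\|u\|_{W^{1,q}(B_{19/20})}\le C(\|u\|_{L^\infty(B_1)}+\|f\|_{L^p(B_1)})$ after the usual rescaling of the interior estimate to the pair $B_{19/20}\Subset B_1$. By Lemma~\ref{lem_puredequeijo}, $u$ is twice differentiable a.e.\ and its pointwise Hessian satisfies the equation a.e., so a.e.\ in $B_1$
\[
	\mathcal P^-_{\lambda,\Lambda}(D^2u(x))-\gamma|Du(x)|-\omega(|u(x)|)\le F(D^2u(x),Du(x),u(x),x)=f(x),
\]
and symmetrically with $\mathcal P^+$ and $+$ signs. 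Next, using Assumption~A\ref{assump_F} with $(M,p,r)=(D^2u(x),0,0)$ and $(N,q,s)=(D^2u(x),Du(x),u(x))$, one gets
\[
	\big|\tilde f(x)-F(D^2u(x),Du(x),u(x),x)\big|\le \gamma|Du(x)|+\omega(|u(x)|),
\]
hence $|\tilde f(x)|\le |f(x)|+\gamma|Du(x)|+\omega(|u(x)|)$ a.e. Taking $L^p(B_{9/10})$ norms and using the triangle inequality reduces everything to bounding the three terms on the right: $\|f\|_{L^p}$ is given; $\|Du\|_{L^p(B_{9/10})}\le C\|Du\|_{L^q(B_{9/10})}$ by Hölder (since $q>p$ can be arranged, or directly $L^q\hookrightarrow L^p$ on a bounded set) and then by the $W^{1,q}$ estimate above; and $\|\omega(|u|)\|_{L^p(B_{9/10})}\le C$ by the remark closing Section~\ref{subsec_ma}, since $u\in C(\overline{B_{9/10}})\cap L^\infty$ with $\|u\|_{L^\infty}$ controlled. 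Combining, $\|\tilde f\|_{L^p(B_{9/10})}\le C(\|u\|_{L^\infty(B_1)}+\|f\|_{L^p(B_1)})$, which is the claimed bound; in particular $\tilde f\in L^p(B_{9/10})\subset L^p_{\rm loc}(B_1)$, and Corollary~\ref{cor_salvador} applies on $B_{9/10}$.

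The only mild subtlety — and the step I would be most careful about — is matching the scales and the constants: Proposition~\ref{prop_andrzej} is stated for $\Omega'\Subset\Omega$ with a constant depending on $\mathrm{diam}(\Omega')$ and $\mathrm{dist}(\Omega',\partial\Omega)$, and Assumption~A\ref{assump_osc} is assumed to hold on all of $B_1$; one should fix an intermediate ball, say $B_{19/20}$, apply the $W^{1,q}$ estimate there, and note $B_{9/10}\Subset B_{19/20}$, so that all constants are universal (depending only on $d,\lambda,\Lambda,\gamma,\omega,p$ and the fixed choice of $q$). There is no real obstacle here: the argument is a bookkeeping exercise chaining Lemma~\ref{lem_puredequeijo}, Assumption~A\ref{assump_F}, the $L^\infty$-bound on $\omega(|u|)$, and \'Swi\k{e}ch's estimate, and then quoting Corollary~\ref{cor_salvador}.
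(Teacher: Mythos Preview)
Your proposal is correct and follows essentially the same route as the paper: apply \'Swi\k{e}ch's $W^{1,q}$-estimate, use Lemma~\ref{lem_puredequeijo} to write $\tilde f(x)=F(D^2u,0,0,x)-F(D^2u,Du,u,x)+f(x)$ a.e., bound $|\tilde f|\le \gamma|Du|+\omega(|u|)+|f|$ via Assumption~A\ref{assump_F}, estimate in $L^p$ using $q>p$, and conclude with Corollary~\ref{cor_salvador}. The Pucci-operator inequality you display is harmless but unused; otherwise the argument matches the paper step for step.
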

\begin{proof}
We split the proof into two steps.

\bigskip

\noindent{\bf Step 1 -} We start by applying Proposition \ref{prop_andrzej} to the $L^p$-viscosity solutions to \eqref{eq_main}. By taking $\theta$ in Assumption A\ref{assump_osc} such that $\theta\equiv \overline{\theta}$, we get $u\in W^{1,q}_{\rm loc}(B_1)$ and
\begin{equation}\label{eq_estDu}
	\left\|Du\right\|_{L^q(B_{9/10})}\leq C\left(\left\|u\right\|_{L^\infty(\partial B_1)}+\left\|f\right\|_{L^p(B_1)}\right),
\end{equation}
for some universal constant $C>0$. Moreover, because $u$ is an $L^p$-viscosity solution to \eqref{eq_main}, Lemma \ref{lem_puredequeijo} ensures it is twice-differentiable almost everywhere in $B_1$. Define $\tilde f:B_1\to\mathbb{R}$ as
\[
	\tilde{f}(x):=F(D^2u(x),0,0,x).
\]

\bigskip

\noindent{\bf Step 2 -} Resorting once again to Lemma \ref{lem_puredequeijo}, we get that 
\[
	\tilde{f}(x)=F(D^2u(x),0,0,x)-F(D^2u(x),Du(x),u(x),x)+f(x),
\]
almost everywhere in $B_1$. Ellipticity implies 
\[
	\begin{split}
		\big|\tilde{f}(x)\big|\leq \gamma\left|Du(x)\right|+\omega\left(\left|u(x)\right|\right)+\left|f(x)\right|,
	\end{split}
\]
for almost every $x\in B_1$. Using \eqref{eq_estDu}, and noticing that one can always take $q>p$, we get $\tilde{f}\in L^p_{\rm loc}(B_1)$, with
\[
	\big\|\tilde f\big\|_{L^p(B_{9/10})}\leq C\left(\left\|u\right\|_{L^\infty(B_1)}+\left\|f\right\|_{L^p(B_1)}\right),
\]
for some universal constant $C>0$, also depending on $p$. A straightforward application of Corollary \ref{cor_salvador} completes the proof.
\end{proof}

Proposition \ref{prop_wholething} is the main ingredient leading to Theorems \ref{thm_main1} and \ref{thm_main2}. Once it is available, we proceed with the proof of those theorems.

\begin{proof}[Proof of Theorem \ref{thm_main1}]
For clarity, we split the proof into two steps.

\bigskip

\noindent{\bf Step 1 -} Because of Proposition \ref{prop_wholething}, we know that an $L^p$-viscosity solution to \eqref{eq_main} is also an $L^p$-viscosity solution to
\[
	\tilde{F}(D^2u,x)=\tilde{f}\hspace{.2in}\mbox{in}\hspace{.2in}B_{9/10},
\]
where 
\[
	\tilde{F}(M,x):=F(M,0,0,x),
\]
and $\tilde{f}$ is defined as in Proposition \ref{prop_wholething}. To conclude the proof, we must ensure that $\tilde F$ satisfies the conditions in Proposition \ref{thm_dkm12}.

\bigskip

\noindent{\bf Step 2 -} One easily verifies that $\tilde F$ satisfies a $(\lambda,\Lambda)$-ellipticity condition, inherited from the original operator $F$. It remains to control the oscillation of $\tilde F(M,x)$ vis-a-vis its fixed-coefficient counterpart, $\tilde F(M,x_0)$, for $x_0\in B_{9/10}$.

Because 
\[
	\tilde F(M,x)-\tilde F(M,x_0)=F(M,0,0,x)-F(M,0,0,x_0),
\]
one may take $\theta\equiv\theta_1$ in Assumption \ref{assump_osc} to ensure that $\tilde F$ satisfies the conditions in Proposition \ref{thm_dkm12}. Taking
\[
	\theta^*:=\min\left(\, \theta_1,\;\overline\theta\,\right)
\]
and applying Proposition \ref{thm_dkm12} to $u$, the proof is complete.
\end{proof}

The proof of Theorem \ref{thm_main2} follows word for word the previous one, except for the choice of $\theta^*:=\min\left(\, \theta_2,\;\overline\theta\,\right)$, and is omitted.

\bigskip

\noindent{\bf Acknowledgements.} This work was partially supported by the Centre for Mathematics of the University of Coimbra - UIDB/00324/2020, funded by the Portuguese Government through FCT/MCTES. EP is partially funded by FAPERJ (\# E-26/201.390/2021). MW is partially supported by FAPERJ (\# E-26/201.647/2021).

\bibliography{biblio}
\bibliographystyle{plain}

\bigskip

\noindent\textsc{Edgard A. Pimentel \Letter}\\University of Coimbra\\CMUC, Department of Mathematics\\ 3001-501 Coimbra, Portugal\\and\\Pontifical Catholic University of Rio de Janeiro -- PUC-Rio\\22451-900, G\'avea, Rio de Janeiro-RJ, Brazil\\\noindent\texttt{edgard.pimentel@mat.uc.pt}

\bigskip

\noindent\textsc{Miguel Walker}\\Pontifical Catholic University of Rio de Janeiro -- PUC-Rio\\22451-900, G\'avea, Rio de Janeiro-RJ, Brazil\\\noindent\texttt{mwalkeru@mat.puc-rio.br}

\end{document}